\documentclass[a4paper,10pt]{amsart}

\usepackage{amssymb, amsmath}

\newcommand{\pD}[2]{\frac{\partial #1}{\partial #2}}

\newcommand{\vn}[1]{\lVert#1\rVert}
\newcommand{\IP}[2]{\left< #1 , #2 \right>}
\newcommand{\eIP}[2]{\left(\left. #1 \hskip+0.5mm\right| #2 \right)}
\newcommand{\rD}[2]{\frac{d #1}{d #2}}

\newcommand{\R}{\ensuremath{\mathbb{R}}}
\newcommand{\N}{\ensuremath{\mathbb{N}}}

\newtheorem{thm}{Theorem}[section]
\newtheorem{cor}[thm]{Corollary}
\newtheorem{prop}[thm]{Proposition}
\newtheorem{lem}[thm]{Lemma}

\title{Lifespan theorem for simple constrained surface diffusion flows}
\author{Glen Wheeler}

\address{School of Mathematics and Applied Statistics\\
         University of Wollongong\\
         Northfields Ave\\
         Wollongong, NSW 2500, Australia}
\keywords{global differential geometry, fourth order, geometric analysis, parabolic partial
differential equations}

\begin{document}

\begin{abstract}
We consider closed immersed hypersurfaces in $\R^3$ and $\R^4$ evolving by a special class of
constrained surface diffusion flows.  This class of constrained flows includes the classical surface
diffusion flow.  In this paper we present a Lifespan Theorem for these flows, which gives a positive
lower bound on the time for which a smooth solution exists, and a small upper bound on
the total curvature during this time.  The hypothesis of the theorem is that the surface is not
already singular in terms of concentration of curvature.  This turns out to be a deep property of
the initial manifold, as the lower bound on maximal time obtained depends precisely upon the
concentration of curvature of the initial manifold in $L^2$ for $M^2$ immersed in $\R^3$ and
additionally on the concentration in $L^3$ for $M^3$ immersed in $\R^4$.  This is stronger than a
previous result on a different class of constrained surface diffusion flows, as here we obtain an
improved lower bound on maximal time, a better estimate during this period, and eliminate any
assumption on the area of the evolving hypersurface.
\end{abstract}

\maketitle

\section{Introduction}

Let $f:M^n\times[0,T)\rightarrow\R^{n+1}$ be a family of compact immersed hypersurfaces $f(\cdot,t)
= f_t:M^n\rightarrow f_t(M) = M_t \subset \R^{n+1}$ with associated Laplace-Beltrami operator
$\Delta$, unit normal vector field $\nu$, and mean curvature function $H$.  In this paper we study
the constrained surface diffusion flows, where $f$ evolves by
\begin{equation}
\label{CSD}
\pD{}{t}f = (\Delta H + h)\nu,
\end{equation}
where $h:[0,T)\subset I \rightarrow\R$ is called the \emph{constraint function}.  The study of the
fourth order degenerate parabolic quasilinear system of equations \eqref{CSD} is motivated primarily
by choice of constraint function.  The trivial example of $h=0$, classical surface diffusion flow,
is instructive and for this paper our chief motivator.

Indeed, there does already exist a large body of work on the classical surface diffusion flow.  First
proposed by the physicist Mullins \cite{mullins1957ttg} in 1957
, it was originally designed to model the formation of tiny thermal
grooves in phase interfaces where the contribution due to evaporation-condensation was
insignificant.
Some time later, Davi, Gurtin, Cahn and Taylor \cite{cahn1994sms,davi1990mpi}
proposed many other physical models which give rise to the surface diffusion flow.  These all
exhibit a reduction of free surface energy and conservation of volume; an essential characteristic
of surface diffusion flow.  There are also other motivations for the study of surface diffusion
flow.  For example, two years later Cahn, Elliot and Novick-Cohen \cite{cahn1996che} proved that
the surface diffusion flow is the singular limit of the Cahn-Hilliard equation with a concentration
dependent mobility.  Among other applications, this arises in the modeling of isothermal separation
of compound materials.

Analysis of the surface diffusion flow began slowly, with the first works appearing in the early
80s.  Baras, Duchon and Robert \cite{baras1984edi} showed the global existence of weak solutions for
two dimensional strip-like domains in 1984.  Later, in 1997 Elliot and Garcke \cite{elliott1997erd}
analysed the surface diffusion flow of curves, and obtained local existence and regularity for $C^4$-initial
curves, and global existence for small perturbations of circles.  Significantly, Ito
\cite{Ito1999sdf} showed in 1998 that convexity will not be preserved under the surface diffusion
flow, even for smooth, rotationally symmetric, closed, compact, strictly convex initial
hypersurfaces.  In contrast with the case for second order flows such as mean curvature flow, this
behaviour appears pathological.  Escher, Mayer and Simonett \cite{escher98surface} gave several
numerical schemes for modeling surface diffusion flow, and have also given the only two known numerical
examples \cite{mayer2001nss} of the development of a singularity: a tubular spiral and thin-necked
dumbbell.  They also provide an example of an immersion which will self-intersect under the flow, a
figure eight knot.  In 2001, Simonett \cite{simonett2001wfn} used centre manifold techniques to show
that for initial data $C^{2,\alpha}$-close to a sphere, both the surface diffusion and Willmore
flows (Willmore flow in one codimension is $\partial_t f = \Delta H + \vn{A^o}^2H$, where $A^o = A -
\frac{1}{n}gH$) exist for all time and converge asymptotically to a sphere.

There have been many important works on fourth order flows of a slightly different character, from
Willmore flow of surfaces to Calabi flow, a fourth order flow of metrics.  Significant contributions
to the analysis of these flows by the authors Kuwert, Sch\"atzle, Polden, Huisken, Mantegazza and
Chru\'sciel \cite{chrusciel1991sge,kuwert2001wfs,kuwert2002gfw,mantegazza2002sge,polden:cas} are
particularly relevant, as the methods employed there are similar to ours here.  For the study of
constrained flows, we mention the papers 
\cite{athanassenas1997volume,athanassenas2003behaviour,blokhuis1999helfrich,bohle2008constrained,chung1993elastic,helfrich1990calculation,helfrich1985effect,huang1986deformation,lipowsky1991conformation,J1,J2,J3,MWW11,ros1995stability,willmore2000surfaces},
which contain a plethora of applications to motivate the study of non-trivial constraint functions
$h$.

The issue of local well-posedness of \eqref{CSD} is delicate, although standard, and ovecome with
standard techniques as in \cite{escher98surface}, with the constraint function causing no additional
difficulty.  We make no effort to pose an optimal version, although the interested reader may enjoy
\cite{kochlamm} for recent progress in this direction.

\begin{thm}[Short time existence]
For any smooth initial immersion $f_0:M^n\rightarrow\R^{n+1}$ and bounded constraint function
$h:I\rightarrow\R$, with $I$ an interval containing $0$, there exists a unique nonextendable smooth
solution $f:M\times[0,T)\rightarrow\R^{n+1}$ to \eqref{CSD} with $f(\cdot,0) = f_0$, where $0 < T
\le \infty$.  
\end{thm}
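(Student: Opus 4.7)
The plan is to reduce \eqref{CSD} to a scalar fourth-order quasilinear parabolic equation on the fixed manifold $M$ and then appeal to standard parabolic theory; the constraint term $h$, being only a bounded function of $t$, plays the role of a lower-order forcing and does not affect parabolicity.

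First I would parametrize a tubular neighbourhood of the initial immersion $f_0(M)$ by normal graphs: for $\rho:M\times[0,\tau]\to\R$ sufficiently small in $C^1$, consider
$$F_\rho(x,t) = f_0(x) + \rho(x,t)\nu_0(x).$$
Because $M$ is compact, for $\vn{\rho}$ small enough in $C^1$ this is a smooth immersion, and any smooth immersion $C^1$-close to $f_0$ can be written this way up to tangential reparametrization. A direct computation expresses the induced metric $g_\rho$, second fundamental form $A_\rho$ and mean curvature $H_\rho$ as smooth rational functions of $\rho$, $D\rho$ and $D^2\rho$, with denominators bounded away from zero near $\rho=0$. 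Substituting into \eqref{CSD} and projecting onto $\nu_0$, the geometric flow becomes a scalar PDE of the form
$$\pD{\rho}{t} = -a^{ijkl}(\rho,D\rho,D^2\rho)\,\nabla_i\nabla_j\nabla_k\nabla_l\rho + b(\rho,D\rho,D^2\rho,D^3\rho) + c(\rho,D\rho)\,h(t),$$
whose principal part at $\rho\equiv 0$ is (up to sign convention for $H$) $-\Delta_{g_0}^2\rho$.

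Next I would verify strong parabolicity: $-\Delta_{g_0}^2$ is uniformly elliptic of order four on the compact manifold $(M,g_0)$, so by continuity the coefficients $a^{ijkl}$ satisfy a uniform Legendre--Hadamard type bound whenever $\rho$ is sufficiently small in a suitable norm. With parabolicity in hand, short-time existence and uniqueness follow from either (i) Solonnikov-type $L^p$ estimates for linear fourth-order parabolic systems combined with a contraction-mapping argument in a parabolic Sobolev space, or (ii) the classical Schauder theory in parabolic H\"older spaces $C^{4+\alpha,1+\alpha/4}$, as carried out for the case $h\equiv 0$ in \cite{escher98surface}. Since $h$ depends only on $t$ and is bounded, the term $c(\rho,D\rho)\,h(t)$ is Lipschitz in $\rho$ on the relevant ball and contributes a harmless continuous forcing in the fixed-point argument.

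Once $\rho$ is produced on $M\times[0,\tau]$, inversion of the graph parametrization recovers $f$. Uniqueness among smooth solutions is then obtained by noting that any two smooth solutions starting from $f_0$, after tangential reparametrization, are small normal graphs on a short initial interval and therefore coincide as immersions there by the uniqueness of the scalar problem. Defining $T$ as the supremum of all existence times for smooth extensions and invoking local existence around any attained smooth slice yields a unique nonextendable smooth solution on $[0,T)$ with $0<T\le\infty$. The main obstacle is purely computational, namely the bookkeeping of the lengthy rational expressions for $H_\rho$ and $\Delta_\rho H_\rho$ needed to isolate the principal symbol; no new analytic idea is required beyond the observation that the bounded constraint $h$ appears only as a zeroth-order source and therefore preserves parabolicity.
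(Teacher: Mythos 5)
Your sketch follows essentially the same route the paper relies on: the paper offers no proof of its own, deferring to the standard graph-parametrization reduction to a quasilinear fourth-order parabolic problem as in \cite{escher98surface}, with the remark that the constraint function causes no additional difficulty, and your proposal is exactly that argument spelled out, including the correct observation that $h(t)$ enters only as a bounded lower-order forcing. The only point worth tightening is the passage back from the graph solution to the purely normal flow \eqref{CSD} (and the corresponding uniqueness statement), which requires solving the standard reparametrization ODE; you gesture at this with ``up to tangential reparametrization,'' and it is routine.
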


The main issue then becomes global existence.  While we do not treat this explictly here, we do
present a result with applications to singularity analysis, as can be seen in \cite{mySDLTE}.  In
our proof, we exploit the fact that for an $n$-dimensional immersion the integral
\[
\int_M \vn{A}^n d\mu
\]
is scale invariant.  The technique used by Struwe \cite{struwe1985ehm} is then relevant, although as
with all higher order flows the major difficulty is in overcoming the lack of powerful techniques
unique to the second order case.  In particular, we are without the maximum principle, and this
implies that the geometry of the surface could deteriorate, as in \cite{Ito1999sdf}.  Drawing
inspiration from Kuwert and Sch\"atzle \cite{kuwert2002gfw} in particular, we use local integral
estimates to derive derivative curvature bounds under a local smallness of curvature assumption.  In
calculating these estimates it is crucial to only use inequalities which involve universal
constants.  Interpolation inequalities similar in nature to those used by Ladyzhenskaya, Ural'tseva
and Solonnikov \cite{ladyzhenskaya1968laq} and Hamilton \cite{RH}, and the Sobolev inequality of
Michael--Simon \cite{michael1973sam}, are invaluable in this regard.

Following Hamilton \cite{RH}, we denote polynomials in the iterated covariant derivatives of a
tensor $T$ by
\[
P_j^i(T) = \sum_{k_1+\ldots+k_j = i} c_{k_1\cdots k_j}\nabla_{(k_1)}T*\cdots*\nabla_{(k_j)}T,
\]
where $c_{k_1\cdots k_j} \in \R$ and $\nabla_{(k)}T$ is the $k$-th iterated covariant derivative of
$T$; see Section 2 for more details.
For a large class of constrained surface diffusion flows the following theorem applies.

\begin{thm}[Lifespan Theorem, \cite{MWW11}]
\label{t_lifespan}
Suppose $n\in\{2,3\}$ and let $f:M^n\times[0,T)\rightarrow\R^{n+1}$ be a compact immersion with
$C^\infty$ initial data evolving by \eqref{CSD}.
Suppose that for some $j,k,l\in\N_0$ the constraint function
$h:I\supset[0,T)\rightarrow\R$ obeys an estimate
\begin{equation}
h \le \int_M P_j^2(A) + P_k^1(A) + P_l^0(A) d\mu.
\label{LTnewstathstruct}
\end{equation}
Then there are constants $\rho>0$, $\epsilon_0>0$, and $c<\infty$ such that
\begin{equation}
\label{LTnewstatass}
\int_{f^{-1}(B_\rho(x))} \vn{A}^m d\mu\Big|_{t=0} = \epsilon(x) \le \epsilon_0
\qquad
\text{ for any $x\in\R^{n+1}$}
\end{equation}
where $m = \max\{2k-2,2j-k,l,n^2+n-2\}$; and there exists an absolute constant
$C_{A\!B}\in(0,\infty)$ such that
\begin{equation}
\label{AB}
|M_t| \le C_{A\!B},
\qquad\text{ for }\qquad 0 \le t \le \frac{1}{c}\rho^4;
\end{equation}
then the maximal time $T$ of smooth existence for the flow \eqref{CSD} with initial data $f_0 =
f(\cdot,0)$ satisfies
\begin{equation}
\label{eq2}
T \ge \frac{1}{c}\rho^4,
\end{equation}
and we have the estimate
\begin{equation}
\label{eq3}
\int_{f^{-1}(B_\rho(x))} \vn{A}^n d\mu \le c\epsilon(x)
\qquad\text{ for }\qquad
0\le t \le \frac{1}{c}\rho^4.
\end{equation}
\end{thm}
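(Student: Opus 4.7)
The plan is to adapt the local integral estimate technique of Kuwert and Sch\"atzle \cite{kuwert2002gfw} to the constrained setting, combined with a Struwe-type continuity argument. For fixed $x\in\R^{n+1}$ I would choose a smooth cutoff $\gamma:\R^{n+1}\to[0,1]$ supported in $B_{2\rho}(x)$ with $\gamma\equiv 1$ on $B_\rho(x)$ and $|D^k\gamma|\le c\rho^{-k}$, pulled back to $M_t$ via $f$. For $s$ sufficiently large the localized scale-invariant quantity
\[
Y_x(t) = \int_M \vn{A}^n\gamma^s\,d\mu
\]
is the central object. Combining $\partial_t f=(\Delta H+h)\nu$ with the standard identities for $g$, $d\mu$ and $A$, integrating by parts repeatedly, and using the Codazzi and Gauss equations to collect all commutators into the $P_j^i(A)$ calculus, one expects to obtain a differential inequality schematically of the form
\[
\tfrac{d}{dt}Y_x + c_1\!\int_M\!\vn{\nabla^2 A}^2\gamma^s\,d\mu \le \!\int_M\! \bigl(P_2^2(A)+P_4^0(A)\bigr)\gamma^s\,d\mu + \tfrac{c}{\rho^4}\!\int_{\text{supp}\,\gamma}\!\vn{A}^n\,d\mu + h\cdot R(t),
\]
with $R(t)$ a remainder involving derivatives of $\gamma$.

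Next I would apply the Michael--Simon Sobolev inequality \cite{michael1973sam} together with Hamilton-type interpolation \cite{RH,ladyzhenskaya1968laq} to convert each $\int\vn{A}^{n+k}\gamma^s$ in the polynomial bulk into a product of $\int\vn{\nabla^2 A}^2\gamma^s$ factors and a small concentration factor $\bigl(\int_{\text{supp}\,\gamma}\vn{A}^n\,d\mu\bigr)^{2/n}=O(\epsilon_0^{2/n})$ coming from \eqref{LTnewstatass}. Choosing $\epsilon_0$ small thus absorbs \emph{all} curvature-polynomial terms into the favourable $-c_1\int\vn{\nabla^2 A}^2\gamma^s$, leaving only a harmless $c\rho^{-4}\int_{\text{supp}\,\gamma}\vn{A}^n$. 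The threshold $m\ge n^2+n-2$ in the hypothesis is exactly what is needed for the interpolation to close against an $L^n$ smallness, no matter which $P_j^i(A)$ is being estimated.

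The genuinely new difficulty is the constraint contribution. Since $h$ is only a function of time, I would estimate $|h(t)|$ by \eqref{LTnewstathstruct} and then handle each of $P_j^2$, $P_k^1$, $P_l^0$ with H\"older and Michael--Simon; the exponent $m=\max\{2k-2,2j-k,l,n^2+n-2\}$ is chosen precisely so that every resulting integral closes against the small $L^m$ concentration. The area bound \eqref{AB} enters here to control the unweighted global integrals inside $h$, contributing a factor of $|M_t|^{1/q}$ which is uniformly bounded on the claimed time interval. A continuity argument then completes the proof: set $T^{\ast}$ equal to the supremum of $t\in[0,\min\{T,c^{-1}\rho^4\})$ on which $Y_{x'}(t')\le c\epsilon(x')$ uniformly in $x'$; integrating the ODI gives $Y_{x'}(T^{\ast})\le C\epsilon(x')+Ct\rho^{-4}\epsilon_0$, which for $c$ sufficiently large contradicts maximality unless $T^{\ast}=\min\{T,c^{-1}\rho^4\}$. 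To rule out $T^{\ast}=T<c^{-1}\rho^4$, I would run the analogous localized estimate on $\int\vn{\nabla^j A}^2\gamma^s$ for every $j\ge 0$, obtaining uniform smoothness bounds incompatible with blow-up at $T$. The main obstacle throughout is the $h$-contribution: the entire role of the structural hypothesis \eqref{LTnewstathstruct} is to ensure that the sign of the leading $-\int\vn{\nabla^2 A}^2\gamma^s$ term survives the constrained correction.
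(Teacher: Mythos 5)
You should first note that the paper never proves Theorem \ref{t_lifespan}: it is imported verbatim from \cite{MWW11}, and what the paper actually proves (Sections 3--4) is the analogous Theorem \ref{ASLTlifespan} for simple $h$. Measured against that machinery, your architecture is the right one and essentially the same: localized integral estimates with a cutoff pulled back by $f$, Michael--Simon/multiplicative Sobolev and interpolation to absorb the curvature-polynomial bulk using smallness of the scale-invariant concentration, a Struwe-type continuity parameter, and extension past a finite $T$ via uniform bounds on $\int\vn{\nabla_{(k)}A}^2$ and then $\vn{\nabla_{(k)}A}_\infty$. Your reading of the roles of \eqref{LTnewstathstruct} and \eqref{AB} -- namely that they serve to bound $h(t)$ itself once the concentration is small in the high norm $L^m$ -- is also consistent with what the paper reports about the argument in \cite{MWW11}.

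The genuine gap is your choice of central quantity. You propose to evolve $Y_x(t)=\int_M\vn{A}^n\gamma^s\,d\mu$ and claim a differential inequality with the coercive term $-c_1\int_M\vn{\nabla_{(2)}A}^2\gamma^s\,d\mu$ on the left. For $n=3$ this cannot be correct as written: under the parabolic rescaling $f\mapsto\lambda f$, $t\mapsto\lambda^4t$, the left-hand side $\frac{d}{dt}Y_x$ scales like $\lambda^{-4}$ while $\int\vn{\nabla_{(2)}A}^2\gamma^s d\mu$ scales like $\lambda^{-3}$, so the two sides of your ODI are dimensionally incompatible; moreover $\vn{A}^3$ is not smooth where $A$ vanishes and the double integration by parts that produces the full second-derivative term is only available for quadratic integrands. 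The route taken in the paper (Corollary \ref{CorRE2} through Proposition \ref{ASLTkeyest1}, and in \cite{MWW11}) is to evolve the quadratic quantities $\int\vn{\nabla_{(k)}A}^2\gamma^s d\mu$, with the scale-invariant $L^n$ (respectively $L^m$) concentration entering only as a small multiplicative prefactor through the multiplicative Sobolev inequalities of Lemma \ref{MS1lem} and Proposition \ref{MS2prop}; the $L^n$ conclusion \eqref{eq3} in the case $n=3$ is then recovered a posteriori, in the covering/continuity step, by combining the local $L^2$ control with the pointwise curvature bounds of Proposition \ref{ASLTkeyest2}. Your proposal needs this restructuring (and with it a more careful account of where the exponent $m=\max\{2k-2,2j-k,l,n^2+n-2\}$ and the area bound are consumed) before the absorption and continuity steps close.
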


The result we present here is new for the surface diffusion flow, stronger than Theorem
\ref{t_lifespan}, and plays a key role in the analysis of the asymptotic behaviour of the flow. In
particular the main theorem of this papers enables one to guarantee that under certain conditions
finite time curvature singularities possess properties which combined with the results on blowups in
\cite{mySDLTE} allows one to rule out their development entirely.  The key improvements are that the
assumption on the evolving surface area \eqref{AB} is completely removed, and the concentration of
curvature assumption \eqref{LTnewstatass} is in $L^2$ for two dimensional manifolds and additionally
in $L^3$ for three dimensional manifolds.

The reason for these improvements is that we consider only constraint functions which fit into the
following natural class.
A constraint function $h:[0,T)\subset I\rightarrow\R$ which satisfies an estimate
\begin{equation}
\vn{h}_{\infty,J} \le c_h < \infty
\label{simpledef}
\end{equation}
on any closed interval $J\subset [0,T)$ with $c_h=c_h(J)$ is called \emph{simple}.  Note that this
includes constraint functions which are unbounded on $\R$, change sign, and so on.  The
corresponding constrained surface diffusion flow where the constraint function is simple is called
briefly a \emph{simple constrained surface diffusion flow}.  Our main result in this paper is the
following.

\begin{thm}
\label{ASLTlifespan}
Suppose $n\in\{2,3\}$ and let $f:M^n\times[0,T)\rightarrow\R^{n+1}$ be a simple constrained surface
diffusion flow.
Then there are constants $\rho>0$, $\epsilon_0>0$, and $c<\infty$ such that
\begin{equation}
\label{ASLTlifespansmallness}
\int_{f^{-1}(B_\rho(x))} \vn{A}^m d\mu\Big|_{t=0} = \epsilon(x) \le \epsilon_0
\qquad
\text{ for $m = 2,n$, any $x\in\R^{n+1}$},
\end{equation}
and $h$ is simple on $\big[0,\frac{1}{c}\rho^4]$,
then the maximal time $T$
 satisfies
\begin{equation}
\label{ASLTlifespanTestimate}
T \ge \frac{1}{c}\rho^4,
\end{equation}
and we have the estimate
\begin{equation}
\label{ASLTlifespanCurvestimate}
\int_{f^{-1}(B_\rho(x))} \vn{A}^n+\vn{A}^2 d\mu \le c\epsilon(x)
\qquad\text{ for }\qquad
0\le t \le \frac{1}{c}\rho^4.
\end{equation}
\end{thm}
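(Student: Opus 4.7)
My plan is to adapt the localized Kuwert--Sch\"atzle integral estimates used for Theorem \ref{t_lifespan} in \cite{MWW11} (following \cite{kuwert2002gfw}), exploiting the pointwise bound $|h|\le c_h$ directly rather than treating $h$ through the curvature--polynomial structure \eqref{LTnewstathstruct}. Fix a smooth cutoff $\tilde\gamma\in C^\infty_c(B_{2\rho}(x))$ with $\tilde\gamma\equiv 1$ on $B_\rho(x)$ and $|D^k\tilde\gamma|\le C\rho^{-k}$, set $\gamma=\tilde\gamma\circ f$, and track the three localized quantities
\[
E_p(t)=\int_M \vn{A}^p\gamma^s\,d\mu\quad(p=2,n),\qquad \SA(t)=\int_M \gamma^s\,d\mu
\]
for a sufficiently large exponent $s$. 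First I would derive the evolution of $E_p$: the $\Delta H$-piece, after repeated integration by parts, yields the standard dissipation along with $P_j^i(A)*\nabla^{(k)}\gamma$-type error terms which, under smallness of $E_2+E_n$, are dominated by $C\rho^{-4}(E_p+\SA)$ via the Michael--Simon Sobolev inequality \cite{michael1973sam} and the interpolation machinery of \cite{RH}. Since $\nabla h=0$, every $h$-contribution is of the form $h$ times an integral of a curvature polynomial $P_j^0(A)$ paired against $\gamma$ and its derivatives; H\"older, together with the $L^2$ and $L^n$ smallness, bounds these by $c_h$ times products of $E_p^{1/r}$ and $\SA^{1/\sigma}$.

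The heart of the argument is a local area estimate that removes the need for hypothesis \eqref{AB}. Writing the evolutions $\partial_t\,d\mu=(\Delta H+h)H\,d\mu$ and $\partial_t(\gamma\circ f)=(\Delta H+h)\IP{D\tilde\gamma}{\nu}$, and proceeding as above, one obtains a differential inequality of the form
\[
\SA'(t)\le C(1+c_h)\rho^{-4}\bigl(\SA(t)+1\bigr)
\]
whenever $E_2+E_n\le\epsilon_0$ is small enough that the cubic and quartic $|A|$-terms from the $\Delta H$ piece are absorbed by Sobolev and the $c_h\int|H|\gamma^s\,d\mu$-type contributions, estimated through $\int|H|\gamma^s\le E_n^{1/n}\SA^{(n-1)/n}$, are likewise absorbed. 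Gronwall then maintains $\SA(t)\le C(\SA(0)+1)$ throughout $[0,\rho^4/c]$, with $\SA(0)$ finite because $M_0$ is a compact smooth immersion.

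To close the argument, define
\[
T^*=\sup\bigl\{\,t\in[0,T):E_2(\tau)+E_n(\tau)\le K\epsilon_0\ \text{and}\ \SA(\tau)\le K(\SA(0)+1)\ \text{for all }\tau\le t\,\bigr\}
\]
with $K$ a fixed large constant. Continuity gives $T^*>0$; on $[0,T^*]$ the coupled inequalities above, integrated, keep all three quantities strictly below their thresholds for $t\le\rho^4/c$ once $c$ is chosen large and $\epsilon_0$ small, forcing $T^*\ge\rho^4/c$ and yielding \eqref{ASLTlifespanCurvestimate}. The lifespan bound \eqref{ASLTlifespanTestimate} then follows from local $L^2$-estimates on $\nabla_{(k)}A$ produced by the same cutoff technique (as in \cite{MWW11}), combined with the standard extension of short-time existence against a uniform curvature bound.

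The main obstacle will be deriving the $\SA$-inequality: the cubic and quartic products of $A$ and $\nabla A$ arising after integration by parts must all be absorbed using smallness of $E_n$ --- directly in $L^2$ when $n=2$, and via delicate Michael--Simon interpolation when $n=3$ --- while the $h$-generated terms must produce constants depending only on $c_h$, $\rho$, $\SA(0)$, and universal data, never on the evolving global area $|M_t|$. Choosing the cutoff exponent $s$ large enough that the $|D^k\tilde\gamma|\le C\rho^{-k}$ factors integrate by parts cleanly is the combinatorial crux.
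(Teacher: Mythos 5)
Your overall architecture (localized Kuwert--Sch\"atzle estimates, a continuity/bootstrap argument on a smallness threshold, then extension past a finite maximal time) is the right family of argument and matches the paper in outline, but two points need attention, one of which is a genuine gap. First, the ``heart'' of your proposal --- the local area estimate for $\SA(t)=\int_M\gamma^s\,d\mu$ --- is not what removes hypothesis \eqref{AB}, and building the proof around it is both unnecessary and risky. For a \emph{simple} constraint, every $h$-contribution to the evolution of the localized curvature integrals carries at least two curvature factors: the $\partial_t A$ term contributes $h\,A*A$, the measure term contributes $h\,H\vn{\nabla_{(k)}A}^2$, and the $\partial_t\gamma$ term contributes $h\vn{\nabla_{(k)}A}^2(D\tilde\gamma\cdot\nu)$. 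Hence, as in Proposition \ref{ASLTev1} and Proposition \ref{ASLTkeyest1}, all $h$-terms are bounded by $c_h$ times $\int_{[\gamma>0]}\vn{A}^2d\mu$ and terms absorbable into the dissipation; no bare area ever appears. If, as you propose, you instead estimate $h$-terms by products of powers of $E_p$ and $\SA$ and feed a Gronwall bound $\SA(t)\le C(\SA(0)+1)$ back into the curvature estimates, your final constants (hence $\epsilon_0$ and $c$ in \eqref{ASLTlifespanCurvestimate}) inherit a dependence on the initial local area $\SA(0)$, which is exactly the kind of dependence the theorem is designed to eliminate. Also note the technical wrinkle that the cutoff-derivative error terms are controlled by the measure of the support $[\gamma>0]$, not by the weighted quantity $\SA$, so even your Gronwall step needs nested cutoffs or a covering argument; better to drop $\SA$ from the scheme altogether.

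The genuine gap is in closing the bootstrap for $n=3$. Your threshold involves $E_n$, the scale-invariant $L^3$ concentration, but none of the differential inequalities you list improves that quantity: the $L^2$-type evolution (your $E_2$ inequality, i.e.\ Proposition \ref{ASLTkeyest1}) controls $\int\vn{A}^2\gamma^s$ and the dissipation, and you defer all derivative estimates to the very end, ``after'' the continuity argument. As written, nothing forces $E_3(\tau)$ to stay strictly below $K\epsilon_0$ on $[0,T^*]$, so $T^*\ge\rho^4/c$ does not follow. You must either derive a direct evolution estimate for $\int\vn{A}^3\gamma^s$ (delicate: the dissipation degenerates where $A=0$ and the interpolation is not the clean one of Lemma \ref{MS1lem}), or do what the paper does: bring the derivative estimates \emph{into} the bootstrap, using the pointwise bound $\vn{A}_{\infty}$ from Proposition \ref{ASLTkeyest2} on the smaller ball to convert the $L^2$ bound back into $L^3$ control via $\int_{B_{1/2}}\vn{A}^3d\mu\le\vn{A}_{\infty,B_{1/2}}\int_{B_{1/2}}\vn{A}^2d\mu$, as in \eqref{ASLTe:prop44pluscovering}; this is precisely why the constant from Proposition \ref{ASLTkeyest2} enters the $n=3$ threshold $\sigma(3)$ and why the $L^2$ smallness in \eqref{ASLTlifespansmallness} is assumed in addition to $L^3$. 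For $n=2$ the issue is invisible since $L^n=L^2$, so your sketch is essentially the paper's argument there; for $n=3$ the circularity must be resolved explicitly before the claim ``forcing $T^*\ge\rho^4/c$'' is justified.
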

There is no easy relationship between the geometrically motivated constraint functions considered in
\cite{MWW11} and the simple constraint functions considered here.  Despite the stronger statement
Theorem \ref{ASLTlifespan}, one may consider the class of simple constraint functions as being
`larger' than the class of constraint functions which satisfy the geometric growth condition
\eqref{LTnewstathstruct}.  This is due to the following fact.  In \cite{MWW11} we prove that every
constraint function satisfying the growth condition \eqref{LTnewstathstruct} and giving rise to an
area bound as in \eqref{AB} is in fact bounded, given that the concentration of curvature in a high
enough $L^p$ norm is sufficiently small.  In this sense one may regard those functions as satisfying
\eqref{simpledef} under the additional condition that \eqref{ASLTlifespansmallness} holds for later
times in a higher $L^p$ norm.  Additionally, note that there are constraint functions such as $h(t)
= e^t$, $h(t) = \sin t$, $h(t) = \frac{1}{1+t}$, $h(t) = -t$, which easily satisfy \eqref{simpledef}
but do not fit into the framework of \cite{MWW11}.  These may be of interest to model expanding,
breathing, stabilising and shrinking solutions.  Thus we feel that, given the motivating example of
classical surface diffusion flow, one must take both Theorem \ref{t_lifespan} and Theorem
\ref{ASLTlifespan} into account to form a complete picture. 

\section*{Acknowledgements}

This work forms part of the author's PhD thesis under Dr. James McCoy and Prof. Graham Williams,
supported by an Australian Postgraduate Award and the University of Wollongong.  He is grateful for
their advice and for many helpful discussions on this topic.

\section{Notation and preliminary results}

In this section we will collect various general formulae from differential geometry which we will
need when performing the later analysis.  We have as our principal object of study a smooth
immersion $f:M^n\rightarrow\R^{n+1}$ of an $n$-dimensional orientable compact hypersurface $M^n$,
and induced metric tensor with components
\[
g_{ij} = \eIP{\pD{}{x_i}f}{\pD{}{x_j}f},
\]
so that the pair $(M,g)$ is a Riemannian manifold.  In the above equation $\eIP{\cdot}{\cdot}$
denotes the regular Euclidean inner product, and $\pD{}{x_i}$ is the derivative in the direction of
the $i$-th tangent vector.  When convenient we frequently use the abbreviation $\partial_i =
\pD{}{x_i}$.

The Riemannian metric induces an inner product structure on all tensors,
which we define as the trace over pairs of indices with the metric:
\[
\IP{T^{i}_{jk}}{S^i_{jk}} = g_{is}g^{jr}g^{ku}T^i_{jk}S^s_{ru},\qquad \vn{T}^2 = \IP{T}{T},
\]
where repeated indices are summed over from $1$ to $n$.
The mean curvature $H$ is defined by
\[
H = g^{ij}A_{ij} = A_i^i,
\]
where the components $A_{ij}$ of the second fundamental form $A$ are given by
\begin{equation}
A_{ij} = -\eIP{\pD{{}^2}{x_i\partial x_j}f}{\nu}
       = \eIP{\pD{}{x_j}f}{\pD{}{x_i}\nu},
\label{PREPsff}
\end{equation}
where $\nu$ is the outer unit normal vector field on $M$.

The Christoffel symbols of the induced connection are determined by the metric,
\begin{equation}
\label{C3Echristoffelmetric}
\Gamma_{ij}^k = \frac{1}{2}g^{kl}
                \left(\pD{}{x_i}g_{jl} + \pD{}{x_j}g_{il} - \pD{}{x_l}g_{ij}\right),
\end{equation}
so that the covariant derivative on $M$ of a vector $X$ and of a covector $Y$ is
\begin{align*}
\nabla_jX^i &= \pD{}{x_j}X^i + \Gamma^i_{jk}X^k\text{, and}\\
\nabla_jY_i &= \pD{}{x_j}Y_i - \Gamma^k_{ij}Y_k
\end{align*}
respectively.

From the expression \eqref{PREPsff} and the smoothness of $f$ we can see that the second fundamental form is
symmetric; less obvious but equally important is the symmetry of the first covariant derivatives of
$A$, 
\[ \nabla_iA_{jk} = \nabla_jA_{ik} = \nabla_kA_{ij}, \]
commonly referred to as the Codazzi equation.

The fundamental relations between components of the Riemann curvature tensor, the Ricci tensor 
and scalar curvature are given by Gauss' equation
\begin{align*}
R_{ijkl} &= A_{ik}A_{jl} - A_{il}A_{jk},\intertext{with contractions}
g^{jl}R_{ijkl}
   = R_{ik} 
  &= HA_{ik} - A_i^jA_{jk}\text{, and}\\
g^{ik}R_{ik}
   = R
  &= H^2 - \vn{A}^2.
\end{align*}
We will need to interchange covariant derivatives; for vectors $X$ and covectors $Y$ we obtain
\begin{align*}
\nabla_{ij}X^h - \nabla_{ji}X^h &= R^h_{ijk}X^k = (A_{lj}A_{ik}-A_{lk}A_{ij})g^{hl}X^k,\\
\nabla_{ij}Y_k - \nabla_{ji}Y_k &= R_{ijkl}g^{lm}Y_m = (A_{lj}A_{ik}-A_{il}A_{jk})g^{lm}Y_m,
\end{align*}
where $\nabla_{i_1\ldots i_n} = \nabla_{i_1} \cdots \nabla_{i_n}$.
Further, we define $\nabla_{(n)}T$ to be the tensor with components $\nabla_{i_1\ldots
i_n}T_{j_1\ldots}^{k_1\ldots}$.
We also use for tensors $T$ and $S$ the notation $T*S$ to denote a
new tensor formed by summations of contractions of pairs of indices from $T$ and $S$ by the metric
$g$, with possible multiplication of each summation by a universal constant.  The resultant tensor
will have the same type as the other quantities in the equation it appears.  Keeping these in mind
we also denote polynomials in the iterated covariant derivatives of these terms by
\[
P_j^i(T) = \sum_{k_1+\ldots+k_j = i} c_{k_1\cdots k_j}\nabla_{(k_1)}T*\cdots*\nabla_{(k_j)}T,
\]
where the constant $c_{k_1\cdots k_j}\in\R$ is absolute.  As
is common for the $*$-notation, we slightly abuse this constant when certain subterms do not appear
in our $P$-style terms. For example
\begin{align*}
\vn{\nabla A}^2 
  &= \IP{\nabla A}{\nabla A}\\
  &= 1\cdot\left(\nabla_{(1)}A*\nabla_{(1)}A\right) + 0\cdot\left(A*\nabla_{(2)}A\right)\\
  &= P_2^2(A).
\end{align*}
This will occur throughout the paper without further comment.

The Laplacian we will use is the Laplace-Beltrami operator on $M^n$, with the components of $\Delta
T$ given by
\[
\Delta T^i_{jk} = g^{pq}\nabla_{pq}T^i_{jk} = \nabla^p\nabla_pT^i_{jk}.
\]
Using the Codazzi equation with the interchange of covariant derivative formula given above, we
obtain Simons' identity:
\begin{align}
\Delta A_{ij} &= \nabla_{ij}H + HA_{il}g^{lm}A_{mj} - \vn{A}^2A_{ij}\notag\\
              &= \nabla_{ij}H + HA_{i}^lA_{lj} - \vn{A}^2A_{ij},\notag\\
\intertext{or in $*$-notation}
\Delta A &= \nabla_{(2)}H + A*A*A.              \label{SimonsIdentity}
\end{align}
In the coming sections we will be concerned with calculating the evolution of the iterated covariant
derivatives of curvature quantities.  The following less precise interchange of covariant
derivatives formula (derived from the fundamental equations above) will be useful to keep in mind:
\[
\nabla_{ij}T = \nabla_{ji}T + P_2^0(A)*T.
\]
In most of our integral estimates, we will be including a function $\gamma:M\rightarrow\R$ in the
integrand.  Eventually, this will be specialised to a smooth cutoff function between concentric
geodesic balls on $M$.  For now however let us only assume that $\gamma = \tilde{\gamma}\circ f$,
where 
\begin{equation*}
0\le\tilde{\gamma}\le 1,\qquad\text{ and }\qquad 
\vn{\tilde{\gamma}}_{C^2(\R^{n+1})} \le c_{\tilde{\gamma}} < \infty.
\end{equation*}
Using the chain rule, this implies $D\gamma = (D\tilde{\gamma}\circ f)Df$ and then
$D^2\gamma = (D^2\tilde{\gamma}\circ f)(Df,Df) + (D\tilde{\gamma}\circ f)D^2f(\cdot,\cdot)$.
Using the expression
\eqref{C3Echristoffelmetric} for the Christoffel symbols to convert the computations above to
covariant derivatives, and the Weingarten relations to convert the derivatives of $\nu$ to factors
of the second fundamental form with the basis vectors $\partial_if$, we obtain the estimates
\begin{equation}
\label{e:gamma}
\vn{\nabla\gamma} \le c_{\gamma1},\qquad\text{ and }\qquad
\vn{\nabla_{(2)}\gamma} \le c_{\gamma2}(1+\vn{A}).
\end{equation}
At times we will use the set $[\gamma>c] = \{p\in M:\gamma(p)>c\}$ or the set
$[\gamma=c] = \{p\in M:\gamma(p)=c\}$ as the domain of integration.

\section{Integral estimates}

We now establish the fundamental integral estimates which allow us to exert control upon the
curvature and derivatives of curvature by controlling the concentration of the curvature.
Throughout this section we will need various Sobolev and interpolation inequalities.  These are
collected in the appendix for the convenience of the reader.

We begin with the following lemma, whose proof is straightforward, see \cite{huisken1999gee} for
example.

\begin{lem}\label{LemEV1}
For $f:M^n\times[0,T)\rightarrow\R^{n+1}$ evolving by $\partial_tf = F\nu$ the following equations
hold:
\label{l:curvevo}
\begin{align*}
  \pD{}{t}g_{ij} &= 2FA_{ij},\quad
  \pD{}{t}g^{ij} = -2FA^{ij},\quad
  \pD{}{t}d\mu = (HF)d\mu,\\
  \pD{}{t}\nu &= -\nabla F,\quad
  \pD{}{t}A_{ij} = -\nabla_{ij}F + FA_i^pA_{pj},\\
  \pD{}{t}H &= - \Delta F - F \vn{A}^2,\text{ and }\\
  \pD{}{t}A^o_{ij} &= -S^o(\nabla_{(2)}F) + F\big(A_i^pA_{pj}
                                          + \frac{1}{n}g_{ij}|A|^2-\frac{2}{n}HA_{ij}\big),
\end{align*}
where $S^o(T)$ denotes the tracefree part of a symmetric bilinear form $T$.
If $F = \Delta H + h$ then the following evolution equation additionally holds:
\[
  \pD{}{t}A_{ij} = -\Delta^2 A_{ij} + \vn{A}^2A_{ij} + (\Delta H - H + h)A_{ik}A^k_j.
\]
\end{lem}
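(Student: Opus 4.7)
The plan is to adapt the Kuwert--Sch\"atzle local integral estimate machinery, but in the setting where the simplicity assumption on $h$ allows one to dispense with both the area hypothesis \eqref{AB} and the higher $L^p$ concentration in Theorem \ref{t_lifespan}. Throughout I fix $x\in\R^{n+1}$ and take $\gamma=\tilde\gamma\circ f$ with $\tilde\gamma\in C^\infty_c(B_\rho(x))$, $\tilde\gamma\equiv 1$ on $B_{\rho/2}(x)$, $\|\tilde\gamma\|_{C^2}\le c/\rho^2$, so that \eqref{e:gamma} holds with $c_{\gamma 1}\lesssim \rho^{-1}$, $c_{\gamma 2}\lesssim \rho^{-2}$. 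Write $\epsilon(x)=\int_{f^{-1}(B_\rho(x))}(\|A\|^n+\|A\|^2)\,d\mu|_{t=0}$ and $c_h:=\|h\|_{\infty,[0,1/c\,\rho^4]}$.

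First I would derive the fundamental local $L^2$ identity. Using Lemma \ref{l:curvevo} and integrating by parts with the cutoff $\gamma^s$ (for $s$ large depending only on $n$), one obtains
\begin{equation*}
\frac{d}{dt}\int_M \|A\|^2\gamma^s\,d\mu + 2\int_M \|\nabla_{(2)}A\|^2 \gamma^s\,d\mu \;\le\; \int_M P_2^2(A)*A*A\,\gamma^s\,d\mu + R(\gamma) + H(h),
\end{equation*}
where $R(\gamma)$ consists of boundary-type terms involving $\nabla\gamma,\nabla_{(2)}\gamma$ and hence factors of $\rho^{-k}$, and $H(h)$ collects all contributions of the constraint: from $\partial_t A_{ij}$ one picks up $h\,A_i^pA_{pj}$ and from $\partial_t d\mu$ one picks up $hH$, so $|H(h)|\le c\,c_h\int_M \|A\|^3\gamma^s d\mu + c\,c_h\int_M \|A\|\gamma^s d\mu$. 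The purely curvature terms $P_2^2(A)*A*A\,\gamma^s$ are handled as in Kuwert--Sch\"atzle: combine with the good $\|\nabla_{(2)}A\|^2\gamma^s$ term via the Michael--Simon Sobolev inequality and the $\epsilon$-interpolation estimates from the appendix. The crucial point is that Michael--Simon carries a universal constant and requires no ambient area bound, so the smallness $\int \|A\|^n\gamma^s\le c\epsilon_0$ lets one absorb the bad terms into $\int\|\nabla_{(2)}A\|^2\gamma^s$ up to $c\rho^{-4}\int_{[\gamma>0]}d\mu$.

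Next I would handle the $h$-terms and the absence of an area bound in tandem. This is the main obstacle. The $c_h\int\|A\|^3\gamma^s$ term is dangerous for $n=3$ (it is scale-critical) and so I treat the $L^n$ concentration in parallel: derive the analogous local evolution equation for $\int_M \|A\|^n \gamma^s d\mu$ (only needed new for $n=3$), which produces the same style of differential inequality, now with $h$-contribution $c_h\int\|A\|^{n+1}\gamma^s d\mu$ that is again absorbed using the $L^n$-smallness together with Michael--Simon. The local area $\int_{[\gamma>0]}d\mu$ itself is controlled by the Michael--Simon Sobolev inequality applied to $\gamma$: $\bigl(\int \gamma^{n/(n-1)}\bigr)^{(n-1)/n}\le c\int(\|\nabla\gamma\|+\gamma\|H\|)\,d\mu$, so that $\int_{[\gamma>0]}d\mu \le c\rho^n + c\rho^2 \int_M \|A\|^2\gamma^{s-2}d\mu$ for $n\le 3$, feeding back into the estimate and being absorbed by the induction hypothesis on the $L^2$-curvature concentration. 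This removes any need for the external hypothesis \eqref{AB}.

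Combining the $m=2$ and $m=n$ estimates yields a coupled differential inequality
\begin{equation*}
\frac{d}{dt}\bigl[u_2(t)+u_n(t)\bigr] \le c\bigl(\rho^{-4}+c_h\bigr)\bigl(\rho^n + u_2(t)+u_n(t)\bigr),
\qquad u_m(t):=\int_M\|A\|^m\gamma^s\,d\mu,
\end{equation*}
valid as long as $u_2+u_n\le 2c\epsilon_0$. Choosing $\epsilon_0$ small and then $c$ large (depending on $c_h\rho^4$), Gr\"onwall gives $u_2(t)+u_n(t)\le c\epsilon(x)$ on $[0,\rho^4/c]$, which is precisely \eqref{ASLTlifespanCurvestimate}. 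Once this concentration estimate is in force, the standard bootstrapping step (differentiate $\int\|\nabla_{(k)}A\|^2\gamma^s$, apply the same interpolation machinery to control $P$-style terms, iterate in $k$) produces bounds $\sup_{B_{\rho/2}(x)\cap M_t}\|\nabla_{(k)}A\|\le C(k,\rho,\epsilon_0,c_h)$ uniformly on $[0,\rho^4/c]$. By covering $\R^{n+1}$ with balls $B_{\rho/2}(x_\alpha)$ these become global uniform $C^k$ bounds on $A$, which by the short time existence theorem imply $T\ge \rho^4/c$, giving \eqref{ASLTlifespanTestimate}. I expect the main technical difficulty to be the careful absorption of $h$-induced terms in the $n=3$ case simultaneously with the cutoff area estimate, since the $L^3$-critical scaling leaves no slack and all constants must depend only on $c_h$ and the smallness threshold.
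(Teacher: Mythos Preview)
Your proposal does not address the stated lemma at all. The statement in question is Lemma~\ref{LemEV1}, a list of standard evolution equations for $g_{ij}$, $g^{ij}$, $d\mu$, $\nu$, $A_{ij}$, $H$, and $A^o_{ij}$ under a general normal variation $\partial_t f = F\nu$, together with the specialization to $F=\Delta H + h$. What you have written is instead an outline of the proof of the main Lifespan Theorem (Theorem~\ref{ASLTlifespan}): cutoff functions on balls, local $L^2$ energy identities, Michael--Simon absorption under small $L^n$ concentration, Gr\"onwall, and bootstrapping to $C^k$ bounds. None of this is relevant to Lemma~\ref{LemEV1}.

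The paper itself does not give a proof of this lemma; it simply states that the computations are straightforward and refers to the literature. A correct proof is a direct calculation: differentiate $g_{ij}=\eIP{\partial_i f}{\partial_j f}$ in $t$, use $\partial_t f=F\nu$ and the Weingarten relation $\partial_i\nu = A_i^p\,\partial_p f$ to get $\partial_t g_{ij}=2FA_{ij}$; differentiate $g^{ik}g_{kj}=\delta^i_j$ for $\partial_t g^{ij}$; use $\partial_t\sqrt{\det g}=\tfrac12\sqrt{\det g}\,g^{ij}\partial_t g_{ij}$ for $\partial_t d\mu$; differentiate $\eIP{\nu}{\nu}=1$ and $\eIP{\nu}{\partial_i f}=0$ for $\partial_t\nu$; differentiate $A_{ij}=-\eIP{\partial_{ij}f}{\nu}$ and convert second partials to covariant derivatives via the Gauss formula for $\partial_t A_{ij}$; trace for $\partial_t H$; subtract the trace part for $\partial_t A^o_{ij}$. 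For the final assertion, substitute $F=\Delta H+h$ into $\partial_t A_{ij}=-\nabla_{ij}F+FA_i^pA_{pj}$ and apply Simons' identity \eqref{SimonsIdentity} to rewrite $\nabla_{ij}\Delta H$ as $\Delta^2 A_{ij}$ plus curvature terms. That is the entire content; your sketch should be replaced by these computations.
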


\begin{lem}Let $f:M^n\times[0,T)\rightarrow\R^{n+1}$ be a constrained surface diffusion flow.  Then
the following equation holds:
\[
  \pD{}{t}\nabla_{(k)}A = -\Delta^2\nabla_{(k)}A + hP_2^k(A) + P_3^{k+2}(A).
\]
\end{lem}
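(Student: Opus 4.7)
The plan is to prove this by induction on $k \in \N_0$. For the base case $k=0$ we use the $F = \Delta H + h$ evolution equation from Lemma \ref{l:curvevo},
\[
\pD{}{t}A_{ij} = -\Delta^2 A_{ij} + \vn{A}^2 A_{ij} + (\Delta H - H + h)A_{ik}A^k_j,
\]
and observe that: the contribution $hA_{ik}A^k_j$ has the form $hP_2^0(A)$; the term $\Delta H \cdot A*A$ rewrites via $\Delta H = g^{ij}g^{kl}\nabla_{ij}A_{kl}$ as a member of $P_3^2(A)$; and the remaining $\vn{A}^2A$ and $HA*A$ pieces are $P_3^0(A)$, absorbed into $P_3^2(A)$ under the standard convention permitting lower derivative orders to be collected into higher-order $P$-style terms.

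For the inductive step, assume the identity at level $k$. Since the covariant derivative depends on the Christoffel symbols and hence on the time-varying metric, we have
\[
\pD{}{t}\nabla_{(k+1)}A = \nabla\Big(\pD{}{t}\nabla_{(k)}A\Big) + \Big(\pD{}{t}\Gamma\Big) * \nabla_{(k)}A.
\]
Substituting the inductive hypothesis, the first term becomes $-\nabla\Delta^2\nabla_{(k)}A + \nabla(hP_2^k(A)) + \nabla P_3^{k+2}(A)$. The last two pieces are immediate: since $h$ depends only on $t$, $\nabla(hP_2^k(A)) = hP_2^{k+1}(A)$, and by the Leibniz rule $\nabla P_3^{k+2}(A) = P_3^{k+3}(A)$.

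For the principal term, I would commute $\nabla$ past $\Delta^2$ using the interchange formula $\nabla_{ij}T = \nabla_{ji}T + P_2^0(A)*T$ (itself a consequence of Gauss' equation), producing
\[
\nabla\Delta^2\nabla_{(k)}A = \Delta^2\nabla_{(k+1)}A + [\nabla,\Delta^2]\nabla_{(k)}A,
\]
where the commutator is a sum of terms of schematic form $\nabla_{(a)}A * \nabla_{(b)}A * \nabla_{(c)}A$ with $a + b + c \le k+3$, and hence lies in $P_3^{k+3}(A)$. For the Christoffel correction, the standard formula $\pD{}{t}\Gamma = g^{-1}*\nabla(\pD{}{t}g) = g^{-1}*\nabla(FA)$ with $F = \Delta H + h$ gives
\[
\Big(\pD{}{t}\Gamma\Big) * \nabla_{(k)}A \sim \nabla(\Delta H \cdot A + hA) * \nabla_{(k)}A,
\]
whose $h$-piece lies in $hP_2^{k+1}(A)$ and whose geometric piece, after writing $\Delta H$ and $\nabla\Delta H$ as metric contractions of $\nabla_{(2)}A$ and $\nabla_{(3)}A$ respectively, lies in $P_3^{k+3}(A)$. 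Summing all contributions completes the induction.

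The main obstacle is the bookkeeping in the commutator $[\nabla,\Delta^2]\nabla_{(k)}A$: one must verify that every interchange of covariant derivatives, although producing curvature-valued error terms, leaves us with a three-factor polynomial in iterated derivatives of $A$ whose total derivative order does not exceed $k+3$. This reduces to repeated application of the Gauss equation $R_{ijkl} = A_{ik}A_{jl} - A_{il}A_{jk}$ together with the Leibniz rule, and introduces no structurally new terms beyond those already accommodated by the $P_3^{k+3}(A)$ symbol.
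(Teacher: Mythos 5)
Your induction skeleton --- base case from Lemma \ref{l:curvevo}, inductive step by commuting $\partial_t$ past $\nabla$ through $\partial_t\Gamma = g^{-1}*\nabla(\partial_t g)$ and commuting $\nabla$ past $\Delta^2$ through the Gauss equation --- is exactly the standard route behind this lemma (the paper omits the proof, as it follows the Kuwert--Sch\"atzle computation), and your inductive step is essentially sound. The genuine gap is in the base case. The symbol $P_j^i(A)$ is defined by a sum over $k_1+\cdots+k_j = i$ \emph{exactly}; the ``slight abuse'' the paper allows is only that some coefficients may vanish, not that terms of lower total derivative order may be promoted into a higher-order symbol. There is no convention permitting $P_3^0(A)\subset P_3^2(A)$: the two scale differently, and the later estimates (for instance the treatment of $[P_3^2(A)+P_5^0(A)]*A$ in the proof of Proposition \ref{ASLTkeyest1}, where $P_3^2(A)$ is expanded precisely as $\vn{A}^2\vn{\nabla_{(2)}A}+\vn{A}\vn{\nabla A}^2$) depend on the exact total order. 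So the step in which you ``absorb'' $\vn{A}^2A_{ij}$ and $-HA_{ik}A^k_j$ into $P_3^{k+2}(A)$ is not valid as written.

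The repair is to derive the base case honestly from $\partial_t A_{ij} = -\nabla_{ij}F + FA_i^pA_{pj}$ with $F=\Delta H + h$, rather than from the condensed displayed equation: since $h$ is spatially constant, $\nabla_{ij}h = 0$; applying $\Delta$ to Simons' identity \eqref{SimonsIdentity} and interchanging derivatives gives $-\nabla_{ij}\Delta H = -\Delta^2 A_{ij} + P_3^2(A)$ with every cubic term of total derivative order \emph{exactly} two (each interchange trades two covariant derivatives for a factor $R = A*A$, so the weight ``total order plus number of factors'' is preserved), while $\Delta H\,A_i^pA_{pj}\in P_3^2(A)$ and $hA_i^pA_{pj}\in hP_2^0(A)$. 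In particular no genuine zero-order cubic term survives; the terms $\vn{A}^2A_{ij}$ and $-HA_{ik}A^k_j$ you were trying to absorb are an artifact of the condensed form of the last equation in Lemma \ref{l:curvevo} and should not be fed into the induction at face value. The same precision point applies to your commutator bookkeeping: the terms of $[\nabla,\Delta^2]\nabla_{(k)}A$ and of $(\partial_t\Gamma)*\nabla_{(k)}A$ have total order exactly $k+3$, not merely $a+b+c\le k+3$; a term with strictly smaller total order would again fall outside $P_3^{k+3}(A)$ as defined, so the inequality sign in your justification should be an equality, which the scaling-weight count above supplies.
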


\begin{cor}
Let $f:M^n\times[0,T)\rightarrow\R^{n+1}$ be a constrained surface diffusion flow.  Then the
following equation holds:
\[
  \pD{}{t}\vn{\nabla_{(k)}A}^2 = - 2\IP{\nabla_{(k)}A}{\nabla^p\Delta\nabla_p\nabla_{(k)}A}
                                 + [hP_2^k(A) + P_3^{k+2}(A)]*\nabla_{(k)}A.
\]
\label{S4cor2}
\end{cor}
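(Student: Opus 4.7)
The plan is to differentiate $\vn{\nabla_{(k)}A}^2 = \IP{\nabla_{(k)}A}{\nabla_{(k)}A}$ in time via the product rule, keeping in mind that the inner product itself depends on the evolving metric. This gives a ``metric-variation'' contribution coming from the identity $\partial_t g^{ij} = -2FA^{ij}$ of Lemma \ref{LemEV1}, and a ``Leibniz'' contribution $2\IP{\partial_t\nabla_{(k)}A}{\nabla_{(k)}A}$, to which the preceding lemma applies. The entire computation then reduces to showing that every error generated fits into $[hP_2^k(A) + P_3^{k+2}(A)]*\nabla_{(k)}A$, which in turn is a matter of counting factors and derivatives in the $*$-notation.

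For the metric-variation contribution, substituting $F = \Delta H + h$ splits it in two. The $h$-piece is $h\cdot A*\nabla_{(k)}A*\nabla_{(k)}A$; extracting the outer $\nabla_{(k)}A$, this is exactly $hP_2^k(A)*\nabla_{(k)}A$ in the $*$-sense. The $\Delta H$-piece is $\Delta H*A*\nabla_{(k)}A*\nabla_{(k)}A$; since $H$ is a metric trace of $A$, one has $\Delta H = \nabla_{(2)}A$ in the $*$-sense, and the resulting product has three factors of $A$ with total derivative order $k+2$ before pairing against the outer $\nabla_{(k)}A$. It thus falls inside $P_3^{k+2}(A)*\nabla_{(k)}A$.

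For the Leibniz contribution the previous lemma reduces matters to analysing $-2\IP{\nabla_{(k)}A}{\Delta^2\nabla_{(k)}A}$, since the lower-order part of $\partial_t\nabla_{(k)}A$ is already of the right form. I rewrite $\Delta^2 = \nabla^p\nabla_p\Delta$ and invoke the interchange identity $\nabla_p\Delta T = \Delta\nabla_p T + [\nabla_p,\Delta]T$, where the commutator on an arbitrary tensor is $P_2^0(A)*\nabla T + P_2^1(A)*T$; this follows by writing out $\nabla_p \Delta = \nabla_p \nabla^q\nabla_q$ and applying twice the formula $\nabla_{ij}T - \nabla_{ji}T = P_2^0(A)*T$ recorded in Section 2 (which, by Gauss' equation, is the Riemann tensor expressed through $A$). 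Applying one further derivative $\nabla^p$ to this commutator and using Leibniz, with $T = \nabla_{(k)}A$, produces a sum of terms with exactly three factors of $A$ and total derivative count $k+2$, i.e.\ a $P_3^{k+2}(A)$ term. Absorbing this into the stated error yields $\Delta^2\nabla_{(k)}A = \nabla^p\Delta\nabla_p\nabla_{(k)}A + P_3^{k+2}(A)$, and the identity assembles from the three pieces above.

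The main obstacle is the bookkeeping: one has to verify that after distributing $\nabla^p$ across curvature-type terms of the form $A*A*\nabla_{(k+1)}A$ and $A*\nabla A*\nabla_{(k)}A$, every resulting term still has exactly three factors of $A$ and total derivative count $k+2$, so that it sits in $P_3^{k+2}(A)$ under the conventions of Section 2. This count is clean because each application of $\nabla^p$ either raises the derivative order of one $A$-factor by one or lands on a curvature factor while leaving the factor count fixed; no terms with a different factor count are produced, which is the point of having set up the $*$-formalism in this way.
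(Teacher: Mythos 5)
Your argument is correct and is exactly the derivation the paper intends for this corollary: differentiate $\vn{\nabla_{(k)}A}^2$ using $\partial_t g^{ij}=-2FA^{ij}$ for the metric contractions (with $F=\Delta H+h$ feeding the $hP_2^k(A)$ and $P_3^{k+2}(A)$ errors), insert the preceding lemma for $\partial_t\nabla_{(k)}A$, and commute $\Delta^2\nabla_{(k)}A$ into $\nabla^p\Delta\nabla_p\nabla_{(k)}A$ modulo $P_3^{k+2}(A)$ terms via the interchange formula. Your bookkeeping of factor counts and derivative orders is accurate, so nothing is missing.
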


Integration by parts gives us our most basic localised integral estimate.

\begin{cor}
\label{CorRE2}
Let $f:M^n\times[0,T)\rightarrow\R^{n+1}$ be a constrained surface diffusion flow, and $\gamma$ as
in \eqref{e:gamma}.  Then for any $s\ge0$,
\begin{align*}
\rD{}{t}\int_M \vn{\nabla_{(k)}A}^2\gamma^sd\mu + 2\int_M\vn{\nabla_{(k+2)}A}^2\gamma^sd\mu
   &=  \int_M \vn{\nabla_{(k)}A}^2(\partial_t\gamma^s)d\mu
\\ &\hskip-7cm
    + 2\int_M\IP{(\nabla\gamma^s)(\nabla_{(k)}A)}{\Delta\nabla_{(k+1)}A} d\mu
     - 2\int_M\IP{(\nabla\gamma^s)(\nabla_{(k+1)}A)}{\nabla_{(k+2)}A}d\mu 
\\ &\hskip-7cm
    + \int_M\gamma^s[(P_3^{k+2}(A)+hP_2^k(A))*\nabla_{(k)}A]d\mu.
\end{align*}
\end{cor}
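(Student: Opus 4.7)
My plan is to differentiate the integral in time using Corollary \ref{S4cor2} together with the measure evolution from Lemma \ref{l:curvevo}, and then to redistribute the resulting top-order term via two successive integrations by parts on the closed manifold $M$.

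First, the product rule and the identity $\partial_t d\mu = HF\,d\mu$ give
\[
\rD{}{t}\int_M \vn{\nabla_{(k)}A}^2\gamma^s d\mu = \int_M \pD{}{t}\vn{\nabla_{(k)}A}^2\,\gamma^s d\mu + \int_M \vn{\nabla_{(k)}A}^2 \partial_t(\gamma^s)\, d\mu + \int_M HF\vn{\nabla_{(k)}A}^2\gamma^s\, d\mu.
\]
Substituting Corollary \ref{S4cor2} for the first integrand exposes the principal term $-2\int_M \IP{\nabla_{(k)}A}{\nabla^p\Delta\nabla_p\nabla_{(k)}A}\gamma^s d\mu$ together with a lower-order $P$-contribution that already matches the target right-hand side.

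Next I perform two integrations by parts on the principal term. Writing $T := \nabla_{(k)}A$ and expanding the divergence of $T^I\Delta\nabla_pT_I\,\gamma^s$, a first IBP (no boundary terms since $M$ is closed) replaces $-2\int\IP{T}{\nabla^p\Delta\nabla_pT}\gamma^s d\mu$ with $2\int\IP{\nabla_{(k+1)}A}{\Delta\nabla_{(k+1)}A}\gamma^s d\mu$ plus the cross-term $2\int\IP{(\nabla\gamma^s)(\nabla_{(k)}A)}{\Delta\nabla_{(k+1)}A}\,d\mu$. A second IBP against one copy of $\nabla$ in $\Delta = \nabla^q\nabla_q$ converts the former into $-2\int\vn{\nabla_{(k+2)}A}^2\gamma^s d\mu$ and a further cross-term $-2\int\IP{(\nabla\gamma^s)(\nabla_{(k+1)}A)}{\nabla_{(k+2)}A}\,d\mu$. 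Moving the $\vn{\nabla_{(k+2)}A}^2$ contribution to the left-hand side produces the two cross-terms in the form stated.

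Finally, I need to absorb the measure-derivative term $\int HF\vn{\nabla_{(k)}A}^2\gamma^s d\mu$ into the $P$-structure on the right. Writing $F = \Delta H + h$, the piece $H\Delta H\vn{\nabla_{(k)}A}^2$ fits the schematic $P_3^{k+2}(A)*\nabla_{(k)}A$ (three curvature factors $H,\Delta H,\nabla_{(k)}A$ with $0+2+k$ derivatives, paired with the remaining $\nabla_{(k)}A$), and $hH\vn{\nabla_{(k)}A}^2$ has the form $hP_2^k(A)*\nabla_{(k)}A$, so both are already subsumed by the term displayed. No genuine obstacle arises here; the only care point is the index bookkeeping in the first IBP, namely ensuring that the $p$-index freed from $\nabla^p$ is exactly the one paired with $\nabla\gamma^s$ in the displayed cross-terms, which is immediate from the divergence expansion of $T^I\Delta\nabla_pT_I\gamma^s$ once one keeps the distinguished $p$-slot bookmarked.
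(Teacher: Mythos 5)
Your argument is correct and follows exactly the route the paper intends (the paper omits the details, stating only that the corollary follows from Corollary \ref{S4cor2} by integration by parts): differentiate under the integral with $\partial_t d\mu = HF\,d\mu$, substitute the pointwise evolution of $\vn{\nabla_{(k)}A}^2$, integrate the principal term by parts twice on the closed manifold to produce the two cross-terms and the $\vn{\nabla_{(k+2)}A}^2$ term, and absorb $H\Delta H\vn{\nabla_{(k)}A}^2$ and $hH\vn{\nabla_{(k)}A}^2$ into $P_3^{k+2}(A)*\nabla_{(k)}A$ and $hP_2^k(A)*\nabla_{(k)}A$ respectively. The index bookkeeping and the absorption of the measure-evolution term are both handled correctly, so there is nothing to add.
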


Combining the above with standard integral estimates and interpolation inequalities as in
\cite{kuwert2002gfw} gives the following proposition.

\begin{prop}
\label{ASLTev1}
Let $f:M^n\times[0,T)\rightarrow\R^{n+1}$ be a simple constrained surface diffusion flow with 
$\gamma$ a cutoff function as in \eqref{e:gamma}.  Then for a fixed $\theta > 0$ and
$s\ge2k+4$,
\begin{align}
&\rD{}{t}\int_M \vn{\nabla_{(k)}A}^2\gamma^sd\mu
 + (2-\theta)\int_M \vn{\nabla_{(k+2)}A}^2\gamma^sd\mu\notag\\*
&\qquad \le (c+ch)\int_M \vn{A}^2\gamma^{s-4-2k}d\mu
            + ch\int_M\left(\nabla_{(k)}[A*A]*\nabla_{(k)}A\right)
                     \gamma^{s} d\mu
\notag\\*&\qquad\qquad\qquad
            + c\int_M \left([P_3^{k+2}(A)+P_5^k(A)]*\nabla_{(k)}A\right)\gamma^{s} d\mu,
\notag
\end{align}
where $c$ depends on $c_{\gamma1}$, $c_{\gamma2}$, $s$, $k$, $c_h([0,T))$, and $\theta$.
\end{prop}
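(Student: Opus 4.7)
The plan is to start from the identity of Corollary \ref{CorRE2} and estimate each of the four right-hand terms, absorbing any contribution of the form $\int_M\vn{\nabla_{(k+2)}A}^2\gamma^sd\mu$ that arises into the good term on the left via Young's inequality $2ab\le\theta a^2+\theta^{-1}b^2$. This reduces the coefficient of the good term from $2$ to $2-\theta$ and produces lower-order remainders, which I would then control using the interpolation machinery collected in the appendix.

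For the two integration-by-parts boundary terms I would expand $\nabla\gamma^s=s\gamma^{s-1}\nabla\gamma$ and use $\vn{\nabla\gamma}\le c_{\gamma1}$. The first boundary term involves the third-order expression $\Delta\nabla_{(k+1)}A$ and requires one more integration by parts, transferring a derivative onto the cutoff or onto the curvature factor; commuting covariant derivatives at this stage produces only $P_2^0(A)$-type corrections, which ultimately fit into the $P_5^k(A)*\nabla_{(k)}A$ family on the right. Applying Young's inequality to the resulting pairings with $\nabla_{(k+2)}A$ yields a portion of $\theta\int_M\vn{\nabla_{(k+2)}A}^2\gamma^sd\mu$ together with intermediate weighted norms of the form $\int_M\vn{\nabla_{(k+1)}A}^2\gamma^{s-2}d\mu$ and $\int_M\vn{\nabla_{(k)}A}^2\gamma^{s-4}d\mu$, plus products with low-degree curvature polynomials.

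The time-derivative term $\int_M\vn{\nabla_{(k)}A}^2(\partial_t\gamma^s)d\mu$ I would treat by computing $\partial_t\gamma^s=s\gamma^{s-1}\eIP{D\tilde\gamma\circ f}{(\Delta H+h)\nu}$, so that $|\partial_t\gamma^s|\le c\gamma^{s-1}(\vn{\nabla_{(2)}A}+|h|)$. The $\vn{\nabla_{(2)}A}$ part contributes a cubic curvature expression that lies naturally in the $P_3^{k+2}(A)*\nabla_{(k)}A$ family after relabeling, while the $|h|$ part gives $ch\int_M\vn{\nabla_{(k)}A}^2\gamma^{s-1}d\mu$, to be treated by interpolation. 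The algebraic polynomial remainder carried through from Corollary \ref{CorRE2} splits as $hP_2^k(A)=h\nabla_{(k)}[A*A]$ up to universal constants (by the definition of the $P$-style notation and the Leibniz rule), producing the targeted $ch\int_M\nabla_{(k)}[A*A]*\nabla_{(k)}A\,\gamma^sd\mu$, and the $P_3^{k+2}(A)*\nabla_{(k)}A$ term is already in its final form.

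To close the estimate I would apply the Kuwert--Sch\"atzle style interpolation inequalities (of Ladyzhenskaya--Ural'tseva--Solonnikov--Hamilton type built on the Michael--Simon Sobolev inequality, gathered in the appendix) to every surviving intermediate norm $\int_M\vn{\nabla_{(i)}A}^p\gamma^{s-q}d\mu$ with $0<i<k+2$. Each such term can be dominated by a small multiple of $\int_M\vn{\nabla_{(k+2)}A}^2\gamma^sd\mu$ (to be absorbed on the left) plus $C\int_M\vn{A}^2\gamma^{s-4-2k}d\mu$ and a $P_5^k(A)*\nabla_{(k)}A$ contribution that collects the quintic-in-curvature leftovers. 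The main obstacle I expect is the book-keeping of interpolation exponents: ensuring the lowest-order remainder lands exactly on $\int_M\vn{A}^2\gamma^{s-4-2k}d\mu$ (which is precisely where the hypothesis $s\ge2k+4$ enters) and that factors of $h$ only survive in front of the two allowed terms. The claimed dependence $c=c(c_{\gamma1},c_{\gamma2},s,k,c_h([0,T)),\theta)$ then follows, since $c_h$ is inherited only by those sub-estimates in which the constraint function multiplies a curvature-derivative factor.
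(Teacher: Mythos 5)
Your plan coincides with the paper's own (unwritten) proof: the paper simply asserts that Proposition \ref{ASLTev1} follows by combining Corollary \ref{CorRE2} with the standard localized integral estimates and interpolation inequalities of Kuwert--Sch\"atzle \cite{kuwert2002gfw}, which is exactly the route you describe (absorb the $\nabla_{(k+2)}A$ contributions by Young's inequality to get the $(2-\theta)$ coefficient, bound the cutoff derivatives via \eqref{e:gamma}, use the flow equation for $\partial_t\gamma^s$, split the $h$-terms via Leibniz, and interpolate the intermediate norms with $s\ge 2k+4$ guaranteeing the weight $\gamma^{s-4-2k}$). The only small imprecision is your claim that the $\vn{\nabla_{(2)}A}$ part of $\partial_t\gamma^s$ lands directly in the $P_3^{k+2}(A)*\nabla_{(k)}A$ family (it is cubic, not quartic, in $A$), but this is harmless since one further application of the Young/interpolation step you already invoke places it among the allowed right-hand terms.
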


We now use the above and specialised multiplicative Sobolev inequalities to demonstrate that small
concentration of curvature along the flow allows one to control the $L^2$ norm of first and second
derivatives of curvature.

\begin{prop} 
\label{ASLTkeyest1}
Let $n\in\{2,3\}$.  Suppose $f:M^n\times[0,T^*]\rightarrow\R^{n+1}$ is a simple constrained surface
diffusion flow and $\gamma$ a cutoff function as in \eqref{e:gamma}.  Then there is an $\epsilon_0$
depending on 
$c_{\gamma1}$, $c_{\gamma2}$, and $c_h([0,T^*])$ such that if
\begin{equation}
\label{ASLTkeyest1smallness}
\epsilon = \sup_{[0,T^*]}\int_{[\gamma>0]}\vn{A}^nd\mu\le\epsilon_0
\end{equation}
then for any $t\in[0,T^*]$ we have
\begin{equation}
  \begin{split}
&\int_{[\gamma=1]} \vn{A}^2 d\mu + \int_0^t\int_{[\gamma=1]} (\vn{\nabla_{(2)}A}^2 +
\vn{A}^2\vn{\nabla A}^2 + \vn{A}^6) d\mu d\tau \\
&\qquad\qquad\qquad \le \big(1+(n-2)t\big)\int_{[\gamma > 0]} \vn{A}^2 d\mu\Big|_{t=0}
 + c\big(t+(n-2)e^t\big)\epsilon^\frac{2}{n},
  \end{split}
\end{equation}
where $c$ depends on $c_{\gamma1}$, $c_{\gamma2}$, and $c_h([0,T^*])$.
\end{prop}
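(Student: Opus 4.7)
The plan is to apply Proposition~\ref{ASLTev1} with $k=0$ and an exponent $s\ge 4$, producing a differential inequality for $\int_M \vn{A}^2\gamma^s d\mu$ whose right-hand side is a sum of a subcritical cutoff piece of schematic type $\vn{A}^2\gamma^{s-4}$ together with the $*$-products $A*A*A$, $P_3^2(A)*A$ and $P_5^0(A)*A$ integrated against $\gamma^s$. Since the constraint function is simple, the factor $h$ is uniformly bounded by $c_h([0,T^*])$ on the interval in question and can be folded into universal constants throughout.

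The first stage is to use Cauchy--Schwarz and Young's inequality on the subterms of $P_3^2(A)*A$. Its highest-derivative subterm is schematically $\vn{A}^3\vn{\nabla_{(2)}A}$, which splits into $\theta\vn{\nabla_{(2)}A}^2 + C_\theta\vn{A}^6$, the derivative piece being absorbed back into the left-hand side and shrinking the coefficient $2-\theta$ slightly further. After also applying Young's inequality to the cubic $ch$-piece, the remaining curvature terms on the right are of the types $\vn{A}^j\gamma^{s}$ for $j\in\{3,4,6\}$ and $\vn{A}^2\vn{\nabla A}^2\gamma^s$, plus the untouched subcritical piece $\vn{A}^2\gamma^{s-4}$.

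The second stage is the heart of the matter: apply the Michael--Simon Sobolev inequality to test functions of the form $\vn{A}^\alpha \gamma^{\beta/2}$, then H\"older's inequality to pull out a factor $\bigl(\int_{[\gamma>0]}\vn{A}^n d\mu\bigr)^{2/n}\le \epsilon^{2/n}$ from the smallness hypothesis~\eqref{ASLTkeyest1smallness}. With exponents chosen so that the residual factor matches a full second derivative of $\vn{A}$, the resulting absorption estimate is schematically
\[
\int_M \bigl(\vn{A}^6 + \vn{A}^2\vn{\nabla A}^2 + \vn{A}^4 + \vn{A}^3\bigr)\gamma^s d\mu
\le c\epsilon^{2/n}\int_M \bigl(\vn{\nabla_{(2)}A}^2 + \vn{A}^2\vn{\nabla A}^2 + \vn{A}^6\bigr)\gamma^s d\mu + c\int_M \vn{A}^2\gamma^{s-4}d\mu.
\]
Taking $\epsilon_0$ small enough that $c\epsilon_0^{2/n}$ is absorbable into the left gives a differential inequality of the schematic form
\[
\rD{}{t}\int_M \vn{A}^2\gamma^s d\mu + c_1\int_M \bigl(\vn{\nabla_{(2)}A}^2 + \vn{A}^2\vn{\nabla A}^2 + \vn{A}^6\bigr)\gamma^s d\mu
\le c\int_M \vn{A}^2\gamma^{s-4} d\mu.
\]

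For $n=2$ the smallness hypothesis controls $\int_{[\gamma>0]}\vn{A}^2 d\mu$ directly, so the right-hand side is already bounded by $c\epsilon^{2/n}$ and a single time integration gives the stated estimate with no exponential factor. For $n=3$ the hypothesis only controls $\int\vn{A}^3$, so further interpolation is required to recycle $\int\vn{A}^2\gamma^{s-4}$ as a constant multiple of $\int\vn{A}^2\gamma^{s}$ plus a piece of order $\epsilon^{2/n}$, and Gronwall's inequality applied after integration in time produces the $(n-2)e^t$ factor in the statement. Finally one restricts the left-hand side to $[\gamma=1]$ where $\gamma^s=1$ and dominates the initial data on the right by an integral over $[\gamma>0]$ via $\gamma\le 1$. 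The main obstacle is the second-stage Michael--Simon absorption: the prefactor must come out as exactly $\epsilon^{2/n}$ with \emph{no} auxiliary area dependence --- since eliminating the area hypothesis~\eqref{AB} is the whole improvement of Theorem~\ref{ASLTlifespan} over Theorem~\ref{t_lifespan}, the Sobolev inequality must be used in its fully scale-invariant form with a delicate choice of test function and a careful partition of derivatives between the curvature and cutoff factors.
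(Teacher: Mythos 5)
Your first two stages do match the paper's argument: take $k=0$, $s=4$ in Proposition \ref{ASLTev1}, estimate the terms $P_3^2(A)*A+P_5^0(A)*A$ by Young's inequality, and absorb $\int_M(\vn{A}^6+\vn{A}^2\vn{\nabla A}^2)\gamma^4d\mu$ through a multiplicative Michael--Simon-type Sobolev inequality whose prefactor is a power of $\vn{A}_{n,[\gamma>0]}$ (the paper packages exactly this as Lemma \ref{MS1lem}), so that smallness of $\epsilon_0$ lets you close the good terms on the left; your $n=2$ conclusion is then the paper's. The genuine gap is in your $n=3$ endgame. You propose to ``recycle'' the subcritical term $\int_M\vn{A}^2\gamma^{s-4}d\mu$ (which for $k=0$, $s=4$ is just $\int_{[\gamma>0]}\vn{A}^2d\mu$) as a constant multiple of $\int_M\vn{A}^2\gamma^{s}d\mu$ plus a piece of order $\epsilon^{2/3}$ by ``further interpolation''. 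No such interpolation exists: pointwise $\gamma^{s-4}$ is not dominated by $c\,\gamma^{s}$ on the region where $\gamma$ is small but positive, and passing from the $L^3$ control furnished by \eqref{ASLTkeyest1smallness} to $L^2$ via H\"older costs a factor $\mu([\gamma>0])^{1/3}$ --- precisely the area quantity the theorem is designed to avoid, and one which cannot be bounded by local curvature integrals (think of many nearly flat parallel sheets passing through the unit ball). So the step that closes your differential inequality for Gronwall fails; and even if it held, Gronwall applied to $y'\le cy+c\epsilon^{2/3}$ would place the factor $e^{ct}$ on the initial datum $\int_{[\gamma>0]}\vn{A}^2d\mu\big|_{t=0}$, which is weaker than the stated bound, where that datum (not assumed small) carries only linear growth in $t$ and the exponential multiplies $\epsilon^{2/3}$ alone.

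The paper's resolution is different: it keeps $c(n-2)\int_{[\gamma>0]}\vn{A}^2d\mu$ on the right-hand side, integrates in time, and then uses a covering argument --- $[\gamma>0]$ is covered by finitely many balls on which auxiliary cutoff functions of the class \eqref{e:gamma} are identically one, so the offending integral is controlled by finitely many copies of the very localized quantities being evolved --- and applies Gronwall's inequality to that family. This is what produces the structure $\big(1+(n-2)t\big)\int_{[\gamma>0]}\vn{A}^2d\mu\big|_{t=0}+c\big(t+(n-2)e^t\big)\epsilon^{2/n}$ in the statement. To repair your proof, replace the interpolation step by this covering-plus-Gronwall mechanism (or an equivalent device that never invokes an area bound).
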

\begin{proof}
The idea of the proof is to integrate Proposition \ref{ASLTev1}, and then use the multiplicative
Sobolev inequality Lemma \ref{MS1lem}.  This will introduce a multiplicative factor of
$\vn{A}_{n,[\gamma>0]}$ in front of several integrals, which we can then absorb on the left.

Setting $k=0$, $s=4$ and $\theta=\frac{1}{2}$ in Proposition \ref{ASLTev1} we have
\begin{align}
&\rD{}{t}\int_M \vn{A}^2\gamma^4d\mu
 + \frac{3}{2}\int_M \vn{\nabla_{(2)}A}^2\gamma^4d\mu
\le (c+ch)\int_{[\gamma>0]} \vn{A}^2d\mu
\notag\\
&\qquad + ch\int_M\left([A*A]*A\right)
                     \gamma^{4} d\mu
            + c\int_M \left([P_3^{2}(A)+P_5^0(A)]*A\right)\gamma^{4} d\mu.
\label{ASLTkeyest1e1}
\end{align}
First we estimate the $P$-style terms:
\begin{align}
\int_M&\left([P_3^{2}(A)+P_5^0(A)]*A\right)\gamma^{4} d\mu
\notag\\*
&\le c\int_M\Big(\big[\vn{A}^2\cdot\vn{\nabla_{(2)}A}
                + \vn{\nabla A}^2\cdot\vn{A}+\vn{A}^5\big]\vn{A}\Big)\gamma^{4} d\mu
\notag\\&\le
     c\int_M\big[\vn{A}^3\cdot\vn{\nabla_{(2)}A}
                + \vn{\nabla A}^2\cdot\vn{A}^2+\vn{A}^6\big]\gamma^{4} d\mu
\notag\\&\le
     \theta\int_M \vn{\nabla_{(2)}A}^2\gamma^4d\mu
   +  c_\theta\int_M (\vn{A}^6 + \vn{\nabla A}^2\vn{A}^2)\gamma^{4} d\mu.
\notag
\intertext{
We use Lemma \ref{MS1lem} to estimate the second integral and obtain for $n=2$}
\int_M&\left([P_3^{2}(A)+P_5^0(A)]*A\right)\gamma^{4} d\mu
\notag\\*
&\le
     \theta\int_M \vn{\nabla_{(2)}A}^2\gamma^4d\mu
     + c_\theta\int_{[\gamma>0]}\vn{A}^2d\mu\int_M(\vn{\nabla_{(2)}A}^2 + \vn{A}^6)\gamma^4d\mu
\notag\\*&\qquad
     + c_\theta\Big( \int_{[\gamma>0]}\vn{A}^2d\mu \Big)^2,
\label{ASLTkeyest1eq1}
\intertext{and for $n=3$} 
\int_M&\left([P_3^{2}(A)+P_5^0(A)]*A\right)\gamma^{4} d\mu
\notag\\*
&\le
     \theta\int_M \vn{\nabla_{(2)}A}^2\gamma^4d\mu
     + c_\theta\vn{A}_{3,[\gamma>0]}^\frac{3}{2}\int_M(\vn{\nabla_{(2)}A}^2 + \vn{A}^6)\gamma^4d\mu
\notag\\*&\qquad
     + c_\theta\big(c_{\gamma1}\big)^3\big(\vn{A}^3_{3,[\gamma>0]} +
\vn{A}^\frac{9}{2}_{3,[\gamma>0]}\big).
\label{ASLTkeyest1eq2}
\end{align}
We add the integrals $\int_M \vn{A}^6 \gamma^4d\mu$ and $\int_M \vn{\nabla A}^2\vn{A}^2\gamma^4d\mu$
to \eqref{ASLTkeyest1e1} and obtain
\begin{align*}
&\rD{}{t}\int_M \vn{A}^2\gamma^4d\mu
 + \frac{3}{2}\int_M \big(\vn{\nabla_{(2)}A}^2+\vn{A}^2\vn{\nabla A}^2+\vn{A}^6\big)\gamma^4 d\mu
\\
&\qquad \le (c+ch)\int_{[\gamma>0]} \vn{A}^2d\mu
            + ch\int_M\left([A*A]*A\right)
                     \gamma^{4} d\mu
\\
&\qquad\qquad
     + c\int_M \big(\vn{A}^2\vn{\nabla A}^2+\vn{A}^6\big)\gamma^4 d\mu
     + c\int_M \left([P_3^{2}(A)+P_5^0(A)]*A\right)\gamma^{4} d\mu
\\
&\qquad \le c(1+h^2)\int_{[\gamma>0]} \vn{A}^2d\mu
     + c\int_M 
        \big(\vn{A}^3\vn{\nabla_{(2)}A}+\vn{A}^2\vn{\nabla A}^2+\vn{A}^6\big)\gamma^4 d\mu.
\intertext{
For $n=2$, we use the estimate \eqref{ASLTkeyest1eq1} above and obtain}
&\rD{}{t}\int_M \vn{A}^2\gamma^4d\mu
 + \frac{3}{2}\int_M \big(\vn{\nabla_{(2)}A}^2+\vn{A}^2\vn{\nabla A}^2+\vn{A}^6\big)\gamma^4 d\mu
\\
&\qquad \le c(1+h^2)\int_{[\gamma>0]} \vn{A}^2d\mu
     + \theta\int_M\vn{\nabla_{(2)}A}^2\gamma^4d\mu
\\
&\qquad\qquad
     + c_\theta\int_{[\gamma>0]}\vn{A}^2d\mu\int_M(\vn{\nabla_{(2)}A}^2 + \vn{A}^6)\gamma^4d\mu
     + c_\theta\Big( \int_{[\gamma>0]}\vn{A}^2d\mu \Big)^2.
\intertext{For $n=3$, we use instead \eqref{ASLTkeyest1eq2} to obtain}
&\rD{}{t}\int_M \vn{A}^2\gamma^4d\mu
 + \frac{3}{2}\int_M \big(\vn{\nabla_{(2)}A}^2+\vn{A}^2\vn{\nabla A}^2+\vn{A}^6\big)\gamma^4 d\mu
\\*
&\qquad \le c(1+h^2)\int_{[\gamma>0]} \vn{A}^2d\mu
     + \theta\int_M\vn{\nabla_{(2)}A}^2\gamma^4d\mu
\\*
&\qquad\qquad
     + c_\theta\vn{A}_{3,[\gamma>0]}^\frac{3}{2}\int_M(\vn{\nabla_{(2)}A}^2 + \vn{A}^6)\gamma^4d\mu
\\*&\qquad\qquad
     + c_\theta\big(c_{\gamma1}\big)^3\big(\vn{A}^3_{3,[\gamma>0]} + \vn{A}^\frac{9}{2}_{3,[\gamma>0]}\big).
\intertext{Absorbing, we obtain for $n=2$}
&\rD{}{t}\int_M \vn{A}^2\gamma^4d\mu
 + (3-2\theta-2c_\theta\epsilon_0)\frac{1}{2}
 \int_M \big(\vn{\nabla_{(2)}A}^2+\vn{A}^2\vn{\nabla A}^2+\vn{A}^6\big)\gamma^4d\mu
\\*
&\qquad\le c_\theta(1+\epsilon_0+\vn{h}^2_{\infty,[0,T^*]})\epsilon
\\*
&\qquad\le c_\theta\epsilon,
\intertext{and for $n=3$}
&\rD{}{t}\int_M \vn{A}^2\gamma^4d\mu
 + (3-2\theta-2c_\theta\sqrt{\epsilon_0})\frac{1}{2}
 \int_M \big(\vn{\nabla_{(2)}A}^2+\vn{A}^2\vn{\nabla A}^2+\vn{A}^6\big)\gamma^4d\mu
\\*
&\qquad\le c_\theta\big(1 + \vn{h}^2_{\infty,[0,T^*]}\big)\int_{[\gamma>0]}\vn{A}^2d\mu
             + c_\theta\big(\epsilon_0^\frac{1}{3}+\epsilon_0^\frac{5}{6}\big)\epsilon^\frac{2}{3}.
\end{align*}
For $\theta$, $\epsilon_0$ small enough we have
\begin{align*}
  \rD{}{t}\int_{M} \vn{A}^2 \gamma^4d\mu 
&+ \int_{M}
             \big(\vn{\nabla_{(2)}A}^2 + \vn{A}^2\vn{\nabla A}^2 + \vn{A}^6
             \big)\gamma^4 d\mu
\\
&\le c\epsilon^\frac{2}{n} + c(n-2)\int_{[\gamma>0]}\vn{A}^2d\mu,
\end{align*}
with 
$c$ depending on $\epsilon_0$, $c_h([0,t^*])$, $c_{\gamma1}$, and $c_{\gamma2}$.
Integrating, we have for $n=2$
\begin{align*}
\int_{[\gamma=1]} \vn{A}^2 \gamma^4d\mu\ +
 &\int_0^t\int_{[\gamma=1]} (\vn{\nabla_{(2)}A}^2 +
\vn{A}^2\vn{\nabla A}^2 + \vn{A}^6) d\mu d\tau\\*
&\le \int_{[\gamma>0]}\vn{A}^2d\mu\bigg|_{t=0} + c\epsilon t,
\end{align*}
where we used the fact $[\gamma=1]\subset[\gamma>0]$ and $0\le\gamma\le1$.
For $n=3$ we use a covering argument and Gronwall's inequality after integrating to obtain
\begin{align*}
\int_{[\gamma=1]}& \vn{A}^2 \gamma^4d\mu\ +
 \int_0^t\int_{[\gamma=1]} (\vn{\nabla_{(2)}A}^2 +
\vn{A}^2\vn{\nabla A}^2 + \vn{A}^6) d\mu d\tau\\*
&\le \int_{[\gamma>0]}\vn{A}^2d\mu\bigg|_{t=0} + c\epsilon^\frac{2}{3}t
   + c\int_0^t\Big(\int_{[\gamma>0]}\vn{A}^2d\mu\bigg|_{t=0} + c\epsilon^\frac{2}{3}\tau\Big)
              e^{\int_\tau^t c d\nu}d\tau
\\
&= (1+ct)\int_{[\gamma>0]}\vn{A}^2d\mu\bigg|_{t=0} + c\epsilon^\frac{2}{3}t
   + c\epsilon^\frac{2}{3}\int_0^t\tau e^{c(t-\tau)}d\tau
\\
&\le (1+ct)\int_{[\gamma>0]}\vn{A}^2d\mu\bigg|_{t=0}
   + c(t+e^t)\epsilon^\frac{2}{3}.
\end{align*}
This finishes the proof.
\end{proof}

We now move on to obtaining estimates for the higher derivatives of curvature in $L^\infty$.
The first issue is in dealing with the $P$-style terms from Proposition \ref{ASLTev1}.  These are
easily interpolated as in \cite{kuwert2002gfw} with the extra terms involving the constraint
function presenting little difficulty.

\begin{prop}
Suppose $f:M^n\times[0,T]\rightarrow\R^{n+1}$ is a constrained surface diffusion flow and
$\gamma$ a cutoff function as in \eqref{e:gamma}.  Then, for $s \ge 2k+4$ the following estimate
holds:
\begin{equation}
\label{ASLTintest}
  \begin{split}
&\rD{}{t}\int_{M} \vn{\nabla_{(k)}A}^2\gamma^s d\mu
 + \int_{M} \vn{\nabla_{(k+2)}A}^2 \gamma^s d\mu \\
&\qquad\qquad\qquad 
  \le c\vn{A}^4_{\infty,[\gamma>0]}\int_M\vn{\nabla_{(k)}A}^2\gamma^sd\mu
       + c\vn{A}^2_{2,[\gamma>0]}(1+\vn{A}^4_{\infty,[\gamma>0]}) \\
&\qquad\qquad\qquad\qquad
       +ch\left(h^\frac{1}{3}\int_M\vn{\nabla_{(k)}A}^2\gamma^sd\mu
              + (1+h^\frac{1}{3})\vn{A}^2_{2,[\gamma>0]}\right).
  \end{split}
\end{equation}
\end{prop}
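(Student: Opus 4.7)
The plan is to take Proposition \ref{ASLTev1} with $\theta=1/2$ and estimate each of the three pieces on its right-hand side separately: the zeroth-order term is trivial, the $P$-style terms are absorbed by the standard Kuwert--Sch\"atzle interpolation machinery of \cite{kuwert2002gfw}, and the constraint contribution is isolated and treated by a Young's inequality tuned to produce precisely the $h^{1/3}$ coefficients appearing in \eqref{ASLTintest}.

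Since $0\le\gamma\le 1$ and $s-4-2k\ge 0$, the term $(c+ch)\int_M\vn{A}^2\gamma^{s-4-2k}d\mu$ is immediately controlled by $(c+ch)\vn{A}^2_{2,[\gamma>0]}$, which is already dominated by the right-hand side of \eqref{ASLTintest}. For the $P$-style contribution, expand $P_3^{k+2}(A)$ and $P_5^k(A)$ into monomials $\nabla_{(a_1)}A*\cdots*\nabla_{(a_m)}A$ of the indicated total degree, pair each with $\nabla_{(k)}A$, and apply pointwise Cauchy--Schwarz followed by the multiplicative Sobolev interpolation inequalities collected in the appendix. Each monomial is then dominated by
\[
\tfrac{1}{8}\int_M\vn{\nabla_{(k+2)}A}^2\gamma^s d\mu + c\vn{A}^4_{\infty,[\gamma>0]}\int_M\vn{\nabla_{(k)}A}^2\gamma^s d\mu + c\vn{A}^2_{2,[\gamma>0]}\bigl(1+\vn{A}^4_{\infty,[\gamma>0]}\bigr),
\]
which supplies the ``non-$h$'' part of \eqref{ASLTintest} after the $\tfrac18\int\vn{\nabla_{(k+2)}A}^2\gamma^s$ piece is absorbed into the $(2-\theta)$ coefficient on the left.

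For the constraint term $ch\int_M\nabla_{(k)}[A*A]*\nabla_{(k)}A\,\gamma^s d\mu$, expand $\nabla_{(k)}[A*A]=\sum_{i+j=k}\nabla_{(i)}A*\nabla_{(j)}A$. In the endpoint cases $i\in\{0,k\}$ the integrand is bounded pointwise by $ch\vn{A}\vn{\nabla_{(k)}A}^2\gamma^s$; since $h$ depends only on $t$, Young's inequality with conjugate exponents $4/3$ and $4$ applied to $h\vn{A}$ yields
\[
h\vn{A}\le c\bigl(h^{4/3}+\vn{A}^4_{\infty,[\gamma>0]}\bigr),
\]
and integrating produces exactly the $ch\cdot h^{1/3}\int_M\vn{\nabla_{(k)}A}^2\gamma^s d\mu$ contribution required by \eqref{ASLTintest}, along with an extra $\vn{A}^4_\infty$ piece already accounted for on the right. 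For the interior cases $0<i<k$, the same multiplicative Sobolev interpolations reduce each mixed monomial to either a $\int\vn{\nabla_{(k)}A}^2\gamma^s$- or a $\vn{A}^2_{2,[\gamma>0]}$-type integral; a further Young's inequality strips off the $h$ factor at cost of at most a factor $h^{1/3}$, which is absorbed into $ch(1+h^{1/3})\vn{A}^2_{2,[\gamma>0]}$.

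The main obstacle is the last step: careful bookkeeping of the $h$-degrees across the bidegrees $(i,k-i)$ so that Young's inequality is always applied in a way that produces no higher power than $h^{4/3}$ in front of either $\int_M\vn{\nabla_{(k)}A}^2\gamma^s d\mu$ or $\vn{A}^2_{2,[\gamma>0]}$. Once this is verified, summing the three estimates and absorbing the $\vn{\nabla_{(k+2)}A}^2$ contributions into the left-hand side of Proposition \ref{ASLTev1} yields \eqref{ASLTintest}.
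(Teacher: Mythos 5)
Your proposal is correct and follows essentially the same route as the paper, which obtains this proposition from Proposition \ref{ASLTev1} by interpolating the $P$-style terms as in \cite{kuwert2002gfw}, with the constraint contribution handled exactly by the Young splitting $h\vn{A}_{\infty,[\gamma>0]}\le c\big(h^{4/3}+\vn{A}^4_{\infty,[\gamma>0]}\big)$ that you describe (note $i+j=k$ means the mixed monomials saturate at two factors of $\nabla_{(k)}A$, so no $h$ ever multiplies a $\nabla_{(k+2)}A$ term and the bookkeeping you flag indeed closes). The only minor caveat is that the weighted interpolation inequalities needed for general $k$ are those in the appendix of \cite{kuwert2002gfw} rather than the two specialised inequalities collected in this paper's appendix, but you cite that machinery in any case.
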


We now prove that controlling the concentration of curvature in a ball gives pointwise bounds on all
derivatives of curvature in that ball.

\begin{prop} 
\label{ASLTkeyest2}
Let $n\in\{2,3\}$.
Suppose $f:M^n\times[0,T^*]\rightarrow\R^{n+1}$ is a simple constrained surface diffusion flow
and $\gamma$ is as in \eqref{e:gamma}.
Then there is an $\epsilon_0$ depending on the constants in \eqref{e:gamma} and $c_h([0,T^*])$ such
that if
\begin{equation}
\label{ASLTkeyest2smallness}
\sup_{[0,T^*]}\int_{[\gamma>0]}\vn{A}^nd\mu\le\epsilon_0,
\end{equation}
we can conclude
\begin{equation}
\vn{\nabla_{(k)}A}^2_{\infty,[\gamma=1]}
 \le c
\end{equation}
where $c$ depends on $k$, $T^*$, $c_{\gamma1}$, $c_{\gamma2}$, $c_h([0,T^*])$, and $\alpha_0(k+2)$.
The latter is defined by
\[
\alpha_0(k) = \sum_{j=0}^k \vn{\nabla_{(j)}A}_{2,[\gamma>0]}\bigg|_{t=0}.
\]
\end{prop}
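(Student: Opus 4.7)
The plan is to combine the $L^2$ curvature bounds from Proposition \ref{ASLTkeyest1} with the derivative-of-curvature estimate \eqref{ASLTintest}, Gronwall's inequality, and the Michael--Simon Sobolev embedding. The core difficulty is that the coefficient $\vn{A}^4_{\infty,[\gamma>0]}$ in \eqref{ASLTintest} is not \emph{a priori} finite, so the first task is to control it in an $L^1$-in-time sense; once this is achieved, the rest of the argument iterates cleanly.

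First I would fix a nested family of cutoff functions $\gamma_0=\gamma,\gamma_1,\ldots,\gamma_{k+2}$, each satisfying \eqref{e:gamma}, arranged so that $[\gamma_{j+1}>0]\subset[\gamma_j=1]$ and $[\gamma=1]\subset[\gamma_{k+2}=1]$, providing the geometric slack required at each step. The outer cutoff $\gamma_0$ activates Proposition \ref{ASLTkeyest1} (with $\epsilon_0$ small enough), giving uniform control of $\int \vn{A}^2 \gamma_0^4 d\mu$ and of $\int_0^{T^*}\int (\vn{\nabla_{(2)}A}^2 + \vn{A}^2\vn{\nabla A}^2 + \vn{A}^6)\gamma_0^4 d\mu d\tau$ in terms of $\alpha_0(0)^2$, $\epsilon_0$, and $c_h([0,T^*])$. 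I would then invoke a Gagliardo--Nirenberg/Michael--Simon interpolation from the appendix, of the form $\vn{A}^4_{\infty,[\gamma_1>0]} \le c\big(\vn{\nabla_{(2)}A}^2_{2,[\gamma_1>0]} + \vn{A}^6_{6,[\gamma_1>0]} + \vn{A}^2_{2,[\gamma_1>0]}\big)$, whose right-hand side is $L^1$-integrable in time by the previous bound; this yields $\int_0^{T^*} \vn{A}^4_{\infty,[\gamma_1>0]} d\tau \le C$.

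With this time-integrable $\vn{A}^4_\infty$ bound in hand, I would iterate \eqref{ASLTintest} for $j=1,\ldots,k+2$, reading it with cutoff $\gamma_{j+1}$ as the differential inequality $\rD{}{t}\Phi_j + \int \vn{\nabla_{(j+2)}A}^2 \gamma_{j+1}^s d\mu \le \rho(t)\Phi_j + \sigma(t)$, where $\Phi_j = \int \vn{\nabla_{(j)}A}^2 \gamma_{j+1}^s d\mu$ and $\rho,\sigma\in L^1(0,T^*)$ are controlled by the previous step and by $c_h([0,T^*])$. Gronwall's inequality then gives uniform control of $\Phi_j$ and $\int_0^{T^*} \int \vn{\nabla_{(j+2)}A}^2 \gamma_{j+1}^s d\mu d\tau$, with initial datum $\Phi_j(0) \le \alpha_0(j)^2$; carrying this out up to $j=k+2$ delivers $L^2$ bounds on all derivatives of $A$ up to order $k+2$ on $[\gamma_{k+2}>0]$. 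Finally, a local Michael--Simon Sobolev embedding of the form $\vn{T}^2_{\infty,[\gamma=1]} \le c\big(\vn{\nabla_{(2)}T}^2_{2,[\gamma_{k+2}=1]} + \vn{T}^2_{2,[\gamma_{k+2}=1]}\big)$, valid for $n\in\{2,3\}$, applied with $T=\nabla_{(k)}A$ converts these $L^2$ bounds into the desired pointwise estimate. The dominant obstacle is the interpolation-plus-bootstrap step that extracts $\int_0^{T^*} \vn{A}^4_\infty d\tau \le C$ from Proposition \ref{ASLTkeyest1}; without it the Gronwall mechanism cannot start, so the entire chain rests on squeezing maximum mileage out of that base estimate.
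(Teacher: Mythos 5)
Your overall architecture matches the paper's: nested cutoff functions, the base estimate of Proposition \ref{ASLTkeyest1}, a Sobolev-type inequality to convert the space--time bounds into $\int_0^{T^*}\vn{A}^4_{\infty}\,d\tau \le c$, Gronwall applied to \eqref{ASLTintest}, and a final embedding applied to $T=\nabla_{(k)}A$. (Your induction over $j=1,\dots,k+2$ is unnecessary --- each application of \eqref{ASLTintest} needs only the $L^1$-in-time control of $\vn{A}^4_{\infty}+h^{4/3}$ and the initial datum $\alpha_0(j)$, not the previous derivative order, so the paper simply applies it at orders $k$ and $k+2$ --- but that is harmless.)

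The genuine gap is the final step. The inequality $\vn{T}^2_{\infty,[\gamma=1]}\le c\big(\vn{\nabla_{(2)}T}^2_{2,[\gamma_{k+2}=1]}+\vn{T}^2_{2,[\gamma_{k+2}=1]}\big)$ with $c$ depending only on the cutoff constants is false on immersed hypersurfaces: take $T$ to be a fixed unit tensor on a sphere of radius $r\to 0$; the left side is of order $1$ while the right side tends to $0$. Local Sobolev constants on $M$ must see the geometry, and this is exactly why the Michael--Simon based inequality \eqref{MS2} carries the additional term $\vn{TA^2}^n_{2,[\gamma>0]}$. Once that term is present, applying the embedding to $T=\nabla_{(k)}A$ requires a pointwise-in-time bound on $\vn{A}_{\infty}$ on an intermediate ball, which your $L^1$-in-time bound on $\vn{A}^4_{\infty}$ does not provide. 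The paper inserts precisely this intermediate step: after the Gronwall argument it first bounds $\vn{A}^4_{\infty,[\gamma\ge 15/16]}$ by applying \eqref{MS2} with $T=A$ (together with the $n=3$ statement of Lemma \ref{MS1lem}), and only then applies \eqref{MS2} to $\nabla_{(k)}A$ on the smallest ball. Your uniform-in-time $L^2$ bounds at orders $0$ and $2$ make this repair available, but as written your last inequality is not a valid tool. A secondary point: your additive interpolation $\vn{A}^4_{\infty}\le c\big(\vn{\nabla_{(2)}A}^2_{2}+\vn{A}^6_{6}+\vn{A}^2_{2}\big)$ is scale-correct only for $n=2$; for $n=3$ one must use the multiplicative form \eqref{MS2}, whose bracket carries the power $n=3$ and a prefactor $\vn{A}^{4-n}_{2}$, and the $L^1$-in-time integrability then uses the sup-in-time bound on $\vn{A}_{2}$ from Proposition \ref{ASLTkeyest1} --- this is where the dependence on $\delta=\vn{A}^2_{2,[\gamma>0]}\big|_{t=0}$ enters in the three-dimensional case.
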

\begin{proof}
As before, the idea is to use our previous estimates and then integrate.  The $\epsilon_0$ which we
will use is exactly the same as that in Proposition \ref{ASLTkeyest1}.
We fix $\gamma$ and consider cutoff functions $\gamma_{\sigma,\tau}$ which will allow us to combine
our previous estimates.  Define for $0\le\sigma<\tau\le 1$ functions $\gamma_{\sigma,\tau} =
\psi_{\sigma,\tau}\circ\gamma$ satisfying $\gamma_{\sigma,\tau}=0$ for $\gamma\le\sigma$ and
$\gamma_{\sigma,\tau}=1$ for $\gamma\ge\tau$.  The function $\psi_{\sigma,\tau}$ is chosen such that
$\gamma_{\sigma,\tau}$ satisfies \eqref{e:gamma}, although with different constants.
Acceptable choices are
\[
 c_{\gamma_{\sigma,\tau}1} = \vn{\nabla\psi_{\sigma,\tau}}_\infty\cdot c_{\gamma1},
\text{ and }
c_{\gamma_{\sigma,\tau}2}
 = \max\{c_{\gamma1}^2\vn{\nabla_{(2)}\psi_{\sigma,\tau}}_\infty,
         c_{\gamma2}\vn{\nabla\psi_{\sigma,\tau}}_\infty\}.
\]
Using the cutoff function $\gamma_{0,\frac{1}{2}}$ instead of $\gamma$ in Proposition
\ref{ASLTkeyest1} gives
\begin{align}
\int_{[\gamma_{0,\frac{1}{2}}=1]}\vn{A}^2d\mu+
  \int_0^{T^*}\int_{[\gamma_{0,\frac{1}{2}}=1]}\vn{\nabla_{(2)}A}^2+\vn{A}^6d\mu d\tau
    &\le c\epsilon_0^\frac{2}{n}T^* + \vn{A}^2_{2,[\gamma>0]}\Big|_{t=0}
\notag
\intertext{which is for $n=2$}
\int_{[\gamma\ge\frac{1}{2}]}\vn{A}^2d\mu+
  \int_0^{T^*}\int_{[\gamma\ge\frac{1}{2}]}\vn{\nabla_{(2)}A}^2+\vn{A}^6d\mu d\tau
    &\le c(1+T^*)\epsilon_0
\label{ASLTkeyest2eq1}
\intertext{and for $n=3$}
\notag
\int_{[\gamma\ge\frac{1}{2}]}\vn{A}^2d\mu+
  \int_0^{T^*}\int_{[\gamma\ge\frac{1}{2}]}\vn{\nabla_{(2)}A}^2+\vn{A}^6d\mu d\tau
    &\le c(1+T^*)\big(\delta + \epsilon_0^\frac{2}{3}\big),
\end{align}
where $\delta = \vn{A}^2_{2,[\gamma>0]}\big|_{t=0}$.  Note that we do not need any smallness of
$\delta$.

Recall the multiplicative Sobolev inequality Proposition \ref{MS2prop}:
\begin{equation*}
  \vn{T}^4_{\infty,[\gamma=1]}
    \le c\vn{T}^{4-n}_{2,[\gamma>0]}\big( \vn{\nabla_{(2)}T}^n_{2,[\gamma>0]}
                                   + \vn{TA^2}^n_{2,[\gamma>0]}
                                   + \vn{T}^n_{2,[\gamma>0]}\big).
\tag{\ref{MS2}}
\end{equation*}
Using this with $\gamma_{\frac{1}{2},\frac{3}{4}}$ and \eqref{ASLTkeyest2eq1} above we
obtain for $n=2$
\begin{align}
\int_0^T\vn{A}^4_{\infty,[\gamma\ge\frac{3}{4}]}d\tau
  &\le c\epsilon_0(c\epsilon_0(1+T^*)+\epsilon_0T^*)
\notag\\
  &\le c\epsilon_0.
\label{ASLTkeyest2eq2}
\end{align}
For $n=3$ we similarly obtain
\begin{align}
\int_0^T\vn{A}^4_{\infty,[\gamma\ge\frac{3}{4}]}d\tau
  &\le \sqrt{c(1+T^*)\big(\delta + \epsilon_0^\frac{2}{3}\big)}
               \big[c(1+T^*)\big(\delta + \epsilon_0^\frac{2}{3}\big)\big]^\frac{3}{2}
\notag\\
  &\le c\big(\sqrt{\delta} + \epsilon_0^\frac{1}{3}\big),
\end{align}
where $c$ depends on $c_h([0,T^*])$, $c_{\gamma1}$, $c_{\gamma2}$, $T^*$, $n$, and $\epsilon_0$.

We now use \eqref{ASLTintest} with $\gamma_{\frac{3}{4},\frac{7}{8}}$.  Factorising, we have
\begin{align*}
\rD{}{t}\int_{M} \vn{\nabla_{(k)}A}^2\gamma_{\frac{3}{4},\frac{7}{8}}^s d\mu
&
  \le c\vn{A}^4_{\infty,[\gamma_{\frac{3}{4},\frac{7}{8}}\ge0]}
       \int_M\vn{\nabla_{(k)}A}^2\gamma_{\frac{3}{4},\frac{7}{8}}^sd\mu
\\
&\qquad
       + c\vn{A}^2_{2,[\gamma_{\frac{3}{4},\frac{7}{8}}\ge0]}
          \Big(1+h+\vn{A}^4_{\infty,[\gamma_{\frac{3}{4},\frac{7}{8}}\ge0]}\Big)
\\
&\qquad
       +ch^\frac{4}{3}\Big(\int_M\vn{\nabla_{(k)}A}^2\gamma_{\frac{3}{4},\frac{7}{8}}^sd\mu
              + \vn{A}^2_{2,[\gamma_{\frac{3}{4},\frac{7}{8}}\ge0]}\Big)
\\
&
  \le c\Big(\vn{A}^4_{\infty,[\gamma\ge\frac{3}{4}]}+h^\frac{4}{3}\Big)
       \int_M\vn{\nabla_{(k)}A}^2\gamma_{\frac{3}{4},\frac{7}{8}}^sd\mu
\\
&\qquad
       + c\vn{A}^2_{2,[\gamma\ge\frac{3}{4}]}
         \Big(1+\vn{A}^4_{\infty,[\gamma\ge\frac{3}{4}]}+h+h^{\frac{4}{3}}\Big).
\end{align*}
Integrating,
\begin{align}
&\int_{M} \vn{\nabla_{(k)}A}^2\gamma_{\frac{3}{4},\frac{7}{8}}^sd\mu 
-\int_{M} \vn{\nabla_{(k)}A}^2\gamma_{\frac{3}{4},\frac{7}{8}}^sd\mu\bigg|_{t=0} 
\notag\\
&\qquad\qquad\qquad 
  \le c\int_0^t\left[\Big(\vn{A}^4_{\infty,[\gamma\ge\frac{3}{4}]}+h^\frac{4}{3}\Big)
                     \int_M\vn{\nabla_{(k)}A}^2\gamma_{\frac{3}{4},\frac{7}{8}}^sd\mu\right]d\tau
\notag\\
&\qquad\qquad\qquad\qquad
       + c\int_0^t
          \left[\vn{A}^2_{2,[\gamma\ge\frac{3}{4}]}
                \Big(1+\vn{A}^4_{\infty,[\gamma\ge\frac{3}{4}]}+h+h^{\frac{4}{3}}\Big)
          \right]d\tau.
\label{ASLTkeyest2eq3}
\end{align}
Now from our earlier calculation \eqref{ASLTkeyest2eq2} we have
\[
  \int_0^t\Big(\vn{A}^4_{\infty,[\gamma\ge\frac{3}{4}]}+h^\frac{4}{3}\Big)d\tau
\le c,
\]
and, using our assumption \eqref{ASLTkeyest2smallness}
\[
       c\int_0^t
          \left[\vn{A}^2_{2,[\gamma\ge\frac{3}{4}]}
                \Big(1+\vn{A}^4_{\infty,[\gamma\ge\frac{3}{4}]}+h+h^{\frac{4}{3}}\Big)\right]d\tau
\le c.
\]
Also, we have
\[
\int_{M} \vn{\nabla_{(k)}A}^2\gamma_{\frac{3}{4},\frac{7}{8}}^sd\mu\bigg|_{t=0}
 \le c\alpha_0(k),
\]
where $\alpha_0$ is as in the statement of the proposition.
Therefore, equation \eqref{ASLTkeyest2eq3} is of the form
\[
  \alpha(t) \le \beta(t) + \int_c^t \lambda(\tau)\alpha(\tau)d\tau,
\]
where
\begin{align*}
\alpha(t) &= \int_M \vn{\nabla_{(k)}A}^2\gamma_{\frac{3}{4},\frac{7}{8}}^sd\mu,\\
\beta(t) &= \int_M \vn{\nabla_{(k)}A}^2\gamma_{\frac{3}{4},\frac{7}{8}}^sd\mu\bigg|_{t=0}
     + c\int_0^t
          \left[\vn{A}^2_{2,[\gamma\ge\frac{3}{4}]}
                \Big(1+\vn{A}^4_{\infty,[\gamma\ge\frac{3}{4}]}+h+h^{\frac{4}{3}}\Big)\right]d\tau,
\intertext{and}
\lambda(t) &= 
        \vn{A}^4_{\infty,[\gamma\ge\frac{3}{4}]}+h^{\frac{4}{3}}.
\end{align*}
Noting that $\beta$ and $\int \lambda d\tau$ are bounded by the constants shown above, we can invoke
Gronwall's inequality and conclude 
\begin{equation*}
  \int_{[\gamma\ge\frac{7}{8}]} \vn{\nabla_{(k)}A}^2d\mu
\le \beta(t) + \int_0^t \beta(\tau)\lambda(\tau)e^{\int_\tau^t\lambda(\nu)d\nu}d\tau
\le c\big(k,\alpha_0(k)\big).
\end{equation*}
Trivially, we also have
\begin{equation*}
  \int_{[\gamma\ge\frac{7}{8}]} \vn{\nabla_{(k+2)}A}^2d\mu
\le c\big(k+2,\alpha_0(k+2)\big).
\end{equation*}
Therefore using \eqref{MS2} with $\gamma_{\frac{7}{8},\frac{15}{16}}$, and taking into account the
$n=3$ statement of Lemma \ref{MS1lem} we can bound $\vn{A}_\infty$ on a smaller ball:
\[
\vn{A}^{4}_{\infty,[\gamma\ge\frac{15}{16}]}
\le c(0,\alpha_0(0))^\frac{4-n}{2}\Big(
                                  \big(c(2,\alpha_0(2))^\frac{n}{2}+
                                  \big(c(0,\alpha_0(0))^\frac{n}{2}
                                  \Big)
\le c.
\]
Finally, using \eqref{MS2} with $T=\nabla_{(k)}A$ and $\gamma=\gamma_{\frac{15}{16},1}$ we 
obtain
\begin{align*}
\vn{\nabla_{(k)}A}_{\infty,[\gamma=1]}^4
 &\le c\vn{\nabla_{(k)}A}^{4-n}_{2,[\gamma>\frac{15}{16}]}
       \Big( \vn{\nabla_{(k+2)}A}^n_{2,[\gamma>\frac{15}{16}]}\\
&\hskip+3.5cm
                                   + (\vn{A}^{2n}_{\infty,[\gamma>\frac{15}{16}]}+1)
                                     \vn{\nabla_{(k)}A}^n_{2,[\gamma>\frac{15}{16}]}
       \Big)\\
 &\le c\big(k,\alpha_0(k+2)\big).
\end{align*}
This completes the proof.
\end{proof}

\section{Proof of the Lifespan Theorem}

We begin by scaling $\tilde{f}(x,t) = \frac{1}{\rho}f(x, \rho^4t)$.
Note that $\vn{A}_n^n$ is scale invariant, and so we may assume $\rho=1$.
Note that $h$ may scale in a non-invariant fashion
but this introduces a single change in the constant $c_h$ only, and certainly a scaled simple $h$
(we only perform this rescaling once) remains simple.
We make the definition
\begin{equation}
\label{ASLTe:epsfuncdef}
\eta(t) = \sup_{x\in\R^3}\int_{f^{-1}(B_1(x))} \vn{A}^nd\mu.
\end{equation}
By covering $B_1$ with several translated copies of $B_{\frac{1}{2}}$ there is a constant
$c_{\eta}$ such that
\begin{equation}
\label{ASLTe:epscovered}
\eta(t) \le c_{\eta}\sup_{x\in\R^3}\int_{f^{-1}(B_{\frac{1}{2}}(x))} \vn{A}^nd\mu.
\end{equation}
Note that $c_{\eta} = 4^{n+1}$ is sufficient.

By short time existence we have that $f(M\times[0,t])$ is compact for $t<T$ and so the function
$\eta:[0,T)\rightarrow\R$ is continuous.  We now define
\begin{equation}
\label{ASLTe:struweparameter}
t^{(n)}_0
=
  \sup\{0\le t\le\min(T,\lambda_n) : \eta(\tau)\le 3c_{\eta}\sigma(n)
                                         \ \text{ for }\ 0\le\tau\le t\},
\end{equation}
where
\begin{equation*}
\sigma(n) =
\begin{cases}
\epsilon_0 &\text{ for $n=2$,}
\\
c_{P8}c^*\big(\delta + \epsilon_0^{2/3}\big) &\text{ for $n=3$.}
\end{cases}
\end{equation*}
with $\delta = \sup_{x\in\R^4}\vn{A}^2_{2,f^{-1}(B_1(x))}\big|_{t=0}$, $\lambda_n$ a
parameter to be specified later and
\[
c^* = c_{P8} + c_0c_\eta e^{c_P5/c_0c_\eta}.
\]
The constant $c_{P8}$ is the maximum of 1 and the constant from Proposition \ref{ASLTkeyest2}, and
$c_0$ is the maximum of all the constants on the right hand side of Proposition \ref{ASLTkeyest1}.
Note that the $\epsilon_0$ on the right hand side of the inequality is from equation
\eqref{ASLTlifespansmallness}.  Unlike earlier in Proposition \ref{ASLTkeyest2}, we require $\delta$
small as described in the statement of Theorem \ref{ASLTlifespan}.

The proof continues in three steps.  First, we show that it must be the case that $t_0^{(n)} =
\min(T,\lambda_n)$.  Second, we show that if $t_0^{(n)} = \lambda_n$, then we can conclude the
Theorem \ref{ASLTlifespan}.  Finally, we prove by contradiction that if $T \ne \infty$, then
$t_0^{(n)} \ne T$.  We label these steps as 
\begin{align}
\label{ASLTe:7}
t_0^{(n)} &= \min(T,\lambda_n),\\
\label{ASLTe:8}
t_0^{(n)} &= \lambda_n \Longrightarrow
\text{Theorem \ref{ASLTlifespan}},\\
\label{ASLTe:9}
T &\ne \infty \Longrightarrow
t_0^{(n)} \ne T.
\end{align}
The three statements \eqref{ASLTe:7}, \eqref{ASLTe:8}, \eqref{ASLTe:9} together imply Theorem
\ref{ASLTlifespan}.  We expand the sketch of the argument given above as follows: first notice that
by \eqref{ASLTe:7} $t_0^{(n)} = \lambda_n$ or $t_0^{(n)} = T$, and if $t_0^{(n)} = \lambda_n$ then
by \eqref{ASLTe:8} we have Theorem \ref{ASLTlifespan}.  Also notice that if $t_0^{(n)} = \infty$
then $T = \infty$ and Theorem \ref{ASLTlifespan} follows from estimate
\eqref{ASLTe:prop44pluscovering} below (used to prove statement \eqref{ASLTe:8}).  Therefore the
only remaining case where the Theorem \ref{ASLTlifespan} may fail to be true is when $t_0^{(n)} = T
< \infty$.  But this is impossible by statement \eqref{ASLTe:9}, so we are finished.

We now give the proof of the first step, statement \eqref{ASLTe:7}.
From the assumption \eqref{ASLTlifespansmallness}, 
\[
\eta(0)\le\epsilon_0
 <
\begin{cases}
3c_{\eta}\epsilon_0,&\text{ for $n=2$}
\\
3c_{P8}c_\eta c^*\big(\delta + \epsilon_0^{2/3}\big),&\text{ for $n=3$},
\end{cases}
\]
and therefore \eqref{ASLTe:struweparameter} implies $t^{(n)}_0 > 0$.  Assume for the sake of
contradiction that $t^{(n)}_0 < \min(T,\lambda_n)$.  Then from the definition
\eqref{ASLTe:struweparameter} of $t_0^{(n)}$ and the continuity of $\eta$ we have
\begin{equation}
\label{ASLTe:t0ltmin}
\eta\big(t_0^{(n)}\big) = 
\begin{cases}
3c_{\eta}\epsilon_0,&\text{ for $n=2$}
\\
3c_{P8}c_\eta c^*\big(\delta + \epsilon_0^{2/3}\big),&\text{ for $n=3$,}
\end{cases}
\end{equation}
so long as $\epsilon_0 \le 1$ and $c_{P8}\ge1$.
Recall Proposition \ref{ASLTkeyest1}.  We will now set $\gamma$ to be a cutoff function as in
\eqref{e:gamma} such that
\[
\chi_{B_{\frac{1}{2}}(x)} \le \tilde{\gamma} \le \chi_{B_1(x)},
\]
for some $x\in f(M,t)$.
Choosing a small enough $\epsilon_0$ (by varying $\rho$ in \eqref{ASLTlifespansmallness}), definition
\eqref{ASLTe:struweparameter} implies that the smallness condition \eqref{ASLTkeyest1smallness} is
satisfied on $[0,t_0^{(n)})$.
Therefore we have satisfied all the requirements of Proposition \ref{ASLTkeyest1}, and so we
conclude
\begin{equation}
\label{ASLTe:useprop44}
  \begin{split}
  \int_{f^{-1}(B_{\frac{1}{2}}(x))}& \vn{A}^2 d\mu 
\\
         &\le
 \big(1+(n-2)t\big)\int_{f^{-1}(B_1(x))} \vn{A}^2 d\mu\Big|_{t=0}
 + c\big(t+(n-2)e^t\big)c_\eta\epsilon^\frac{2}{n}
\\
         &\le 
\begin{cases}
2\epsilon_0,
&\text{for $n=2$ and }\lambda_2 = \frac{1}{c_0c_{\eta}},
\\
2c_{P8}c^*\big(\delta + \epsilon_0^{2/3}\big)
&\text{for $n=3$ and }\lambda_3 = c_{P8}\frac{1}{c_0c_{\eta}},
\end{cases}
  \end{split}
\end{equation}
for all $t \in [0,t^*]$, where $t^* < t_0^{(n)}$.
Thus equation \eqref{ASLTe:useprop44} above is true for all $t \in \big[0,t_0^{(n)}\big)$.  We
combine this with \eqref{ASLTe:epscovered} to conclude
\begin{equation}
\label{ASLTe:prop44pluscovering}
\eta(t)
 \le c_{P8}^{n-2}c_{\eta} \sup_{x\in\R^3} \int_{f^{-1}(B_{\frac{1}{2}}(x))} \vn{A}^n d\mu 
 \le 
\begin{cases}
2c_{\eta}\epsilon_0,
&\text{ for $n=2$,}
\\
2c_{P8}c_{\eta}c^*\big(\delta+\epsilon_0^{2/3}\big),
&\text{ for $n=3$,}
\end{cases}
\end{equation}
where $0\le t < t_0^{(n)}$.
Since $\eta$ is continuous, we can let $t\rightarrow t_0^{(n)}$ and obtain a contradiction with
\eqref{ASLTe:t0ltmin}.  Therefore, with the choice of $\lambda_n$ in equation \eqref{ASLTe:useprop44}, the
assumption that $t_0^{(n)} < \min(T,\lambda_n)$ is incorrect.  Thus we have shown \eqref{ASLTe:7}.
We have also proved the second step \eqref{ASLTe:8}. Observe that if $t_0^{(n)} = \lambda_n$ then by
the definition \eqref{ASLTe:struweparameter} of $t_0^{(n)}$, 
\[
  T\ge\lambda_n,
\]
which is \eqref{ASLTlifespanTestimate}.  Also, \eqref{ASLTe:prop44pluscovering} implies
\eqref{ASLTlifespanCurvestimate}.  That is,
we have proved if $t_0^{(n)} = \lambda_n$, then the Lifespan Theorem holds, which is the second step
\eqref{ASLTe:8}.  It only remains to prove equation \eqref{ASLTe:9}.

We assume
\[
t_0^{(n)}=T\ne\infty;
\]
since if $T=\infty$ then \eqref{ASLTlifespanTestimate} holds automatically and again
\eqref{ASLTe:prop44pluscovering} implies \eqref{ASLTlifespanCurvestimate}.  Note also that we can
safely assume $T < \lambda_n$, since otherwise we can apply step two to conclude the Lifespan
Theorem.

Our strategy is to show that in this case the flow exists smoothly up to and including time $T$,
allowing us to extend the flow, thus contradicting the finite maximality of $T$.
Since $h$ is simple, it presents no difficulty, and for finite $T$, $h$ satisfies the requirements
of short time existence.
To show that the immersion $f(\cdot,T)$ satisfies the requirements of short time existence,  we
we use Proposition \ref{ASLTkeyest2} to obtain pointwise bounds for the higher derivatives of
curvature everywhere on $f(\cdot,T)$ and follow a standard proof such as that found in
\cite{kuwert2002gfw} or \cite{huisken1984fmc}.  Therefore we can extend the flow, contradicting the
maximality of $T$.
This establishes \eqref{ASLTe:9} and the theorem is proved.
\qed

\appendix
\section{Sobolev and Interpolation Inequalities}

Here we state the multiplicative Sobolev and interpolation inequalities we have used
in the paper.  We have generalised the inequalities in \cite{kuwert2002gfw} to the case of three
intrinsic dimensions.  Although the proofs are long and involved, they are straightforward and
standard and so we have omitted them, referring the reader to the appendix in \cite{kuwert2002gfw}
or \cite{mythesis} instead.

\begin{lem}
\label{MS1lem}
Let $\gamma$ be as in \eqref{e:gamma}.  Then for an immersed surface $f:M^2\rightarrow\R^{3}$ we
have
  \begin{align*}
  \int_M&\vn{A}^6\gamma^sd\mu + \int_M\vn{A}^2\vn{\nabla A}^2\gamma^sd\mu
\\
   &\le  c\int_{[\gamma>0]}\vn{A}^2d\mu\int_M(\vn{\nabla_{(2)}A}^2 + \vn{A}^6)\gamma^sd\mu
   + c(c_{\gamma1})^4\Big( \int_{[\gamma>0]}\vn{A}^2d\mu \Big)^2,
  \end{align*}
and for an immersion $f:M^3\rightarrow\R^4$,
  \begin{align*}
\int_M&\vn{A}^6\gamma^sd\mu + \int_M\vn{A}^2\vn{\nabla A}^2\gamma^sd\mu
\\
 &\le \theta\int_M \vn{\nabla_{(2)}A}^2\gamma^sd\mu
    +  c\vn{A}_{3,[\gamma>0]}^\frac{3}{2}
     \int_M \big(\vn{\nabla_{(2)} A}^2 + \vn{A}^6\big) \gamma^s d\mu
\\
&\qquad
    + c(c_{\gamma1})^3\big(\vn{A}^3_{3,[\gamma>0]} + \vn{A}^\frac{9}{2}_{3,[\gamma>0]}\big),
  \end{align*}
where $\theta\in(0,\infty)$ and $c$ is an absolute constant depending on $s$ and $\theta$.
\end{lem}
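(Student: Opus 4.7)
The plan is to derive both inequalities from the Michael--Simon Sobolev inequality
\[
\Bigl(\int_M u^{n/(n-1)}\,d\mu\Bigr)^{(n-1)/n} \le c(n)\int_M \bigl(\vn{\nabla u} + u\vn{H}\bigr)d\mu,
\]
applied to suitable powers of $\vn{A}$ tensored with $\gamma^{s/2}$. The dichotomy between $n=2$ and $n=3$ just reflects the differing critical Sobolev exponents, with $n=2$ being the borderline case where H\"older allows us to peel off a clean factor of $\int_{[\gamma>0]}\vn{A}^2$ while $n=3$ only yields an $\vn{A}_3^{3/2}$-weight.

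For $n=2$, apply Michael--Simon first with $u=\vn{A}^2\gamma^{s/2}$ and then with $u=\vn{A}^3\gamma^{s/2}$; each squared application converts an integral of the form $\int \vn{A}^{2q}\gamma^s$ into first-order expressions in $\nabla A$ (from differentiating $\vn{A}^q$) plus lower-order $\gamma$-derivative boundary pieces controlled via \eqref{e:gamma}. Expand, apply H\"older so as to pull out a factor of $\int_{[\gamma>0]}\vn{A}^2\,d\mu$, and absorb all cutoff-derivative terms into the $(c_{\gamma1})^4\bigl(\int_{[\gamma>0]}\vn{A}^2\bigr)^2$ error. The mixed quantity $\int \vn{A}^2\vn{\nabla A}^2\gamma^s$ is handled in parallel: integrate by parts once, using Codazzi to rewrite $\vn{\nabla A}^2$ as a divergence plus an $A*A*\nabla_{(2)}A$ remainder, then Cauchy--Schwarz gives a bound in terms of $\bigl(\int\vn{\nabla_{(2)}A}^2\gamma^s\bigr)^{1/2}\bigl(\int\vn{A}^6\gamma^s\bigr)^{1/2}$; a final Young's inequality produces the stated structure.

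For $n=3$, the same programme goes through with $u=\vn{A}^p\gamma^{s/2}$, but one iteration of Michael--Simon now produces an $L^{3/2}$-norm on the left rather than $L^2$. Consequently H\"older must be set up so that the scale-invariant factor $\vn{A}_{3,[\gamma>0]}$ appears to the power $3/2$, which explains the appearance of $L^3$ norms on the right-hand side. Since one cannot assume $\vn{A}_3$ is small a priori, any $\int\vn{\nabla_{(2)}A}^2\gamma^s$ that survives after H\"older cannot simply be absorbed into the multiplier and must instead be soaked up on the left via Young's inequality with a small parameter $\theta$, accounting for the extra $\theta\int\vn{\nabla_{(2)}A}^2\gamma^s$ term in the statement. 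The residual $(c_{\gamma1})^3\bigl(\vn{A}_3^3 + \vn{A}_3^{9/2}\bigr)$ error collects all contributions from differentiating $\gamma$.

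The principal obstacle is purely bookkeeping: tracking how many powers of $\gamma$ disappear at each integration-by-parts step (which is why the exponent $s$ in the hypothesis must be large enough to leave two second-order integrations' worth of headroom), and verifying that every cutoff-derivative term produced can in fact be controlled solely by $c_{\gamma1}$ without ever invoking $c_{\gamma2}$---a delicate but ultimately routine exercise which scales with $\rho^{-1}$ correctly. This is why the full calculation is deferred to the appendices of \cite{kuwert2002gfw} and \cite{mythesis}.
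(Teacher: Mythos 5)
Your outline follows essentially the same route as the proof this paper relies on: the lemma is stated without proof and deferred to the appendix of \cite{kuwert2002gfw} (and \cite{mythesis}), where precisely this scheme is carried out --- apply the Michael--Simon Sobolev inequality to powers of $\vn{A}$ (and to $\vn{A}\vn{\nabla A}$) weighted by $\gamma^{s/2}$, use H\"older/Kato to extract the factor $\int_{[\gamma>0]}\vn{A}^2d\mu$ for $n=2$ or $\vn{A}_{3,[\gamma>0]}^{3/2}$ for $n=3$, absorb surviving second-derivative integrals with a Young parameter $\theta$, and collect the cutoff-derivative errors into powers of $c_{\gamma1}$. So your proposal matches the intended argument in both structure and the source of the $n=2$ versus $n=3$ dichotomy.
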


\begin{prop}
\label{MS2prop}
 Let $n\in\{2,3\}$.  Then for any tensor $T$
and $\gamma$ as in \eqref{e:gamma},
\begin{equation*}
  \vn{T}^4_{\infty,[\gamma=1]}
    \le c\vn{T}^{4-n}_{2,[\gamma>0]}\big( \vn{\nabla_{(2)}T}^n_{2,[\gamma>0]}
                                   + \vn{TA^2}^n_{2,[\gamma>0]}
                                   + \vn{T}^n_{2,[\gamma>0]}\big),
\label{MS2}
\end{equation*}
where $c$ depends on $c_{\gamma1}$, and $n$.
Assume $T=A$.
Then there exists an $\epsilon_0$ depending on $c_{\gamma1}$, $c_{\gamma2}$, and $n$ such that if 
\[
  \vn{A}^n_{n,[\gamma>0]} \le \epsilon_0
\]
we have
\begin{equation*}
  \vn{A}^{8n-12}_{\infty,[\gamma=1]}
    \le c\epsilon_0
         \big(\vn{\nabla_{(2)}A}^{2n^2-3n}_{2,[\gamma>0]}
           + \epsilon_0\big),
\label{MS2secondstatement}
\end{equation*}
with $c$ depending on $c_{\gamma1}$, $c_{\gamma2}$, $n$, and $\epsilon_0$.
\end{prop}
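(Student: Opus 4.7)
My plan is to derive both inequalities via the Michael--Simon Sobolev inequality combined with Kato's inequality and tensor interpolation, following the Kuwert--Sch\"atzle strategy and its extension to $n=3$ sketched in \cite{mythesis}.

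For the first inequality, I would apply the Michael--Simon Sobolev inequality
\[
\Bigl(\int_M v^{n/(n-1)}\,d\mu\Bigr)^{(n-1)/n}
 \le c\int_M \bigl(\vn{\nabla v} + |H|\,v\bigr)\,d\mu
\]
to $v = \vn{T}^{2q}\gamma^s$ for a short ladder of exponents $q$. Kato's inequality $\vn{\nabla\vn{T}^{2q}}\le 2q\vn{T}^{2q-1}\vn{\nabla T}$ converts scalar gradients into tensor covariant derivatives, the cutoff derivatives are absorbed via \eqref{e:gamma}, and a few iterations raise the integrability exponent of $T$ past $n$. A Morrey-type embedding on $M^n$ (again available through Michael--Simon) then upgrades to a pointwise bound on $[\gamma=1]$. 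An intermediate interpolation, namely $\vn{\nabla T}_2^2 \le c(\vn{T}_2\vn{\nabla_{(2)}T}_2 + \vn{TA^2}_2\vn{T}_2)$ obtained by integration by parts with the cutoff, collapses first-derivative contributions into second-derivative and curvature-cube terms, and the $|H|\,v$ piece in Michael--Simon is repackaged via $|H|\le \sqrt{n}\,\vn{A}$ and Cauchy--Schwarz into the $\vn{TA^2}_2$ factor. Careful bookkeeping of the scaling through these steps yields the exponents $4-n$ on $\vn{T}_2$ and $n$ on the other three factors.

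For the second inequality I would specialise $T=A$ in the first inequality and exploit the local smallness $\vn{A}_n^n\le\epsilon_0$ to absorb the $L^\infty$ contributions hidden in $\vn{A^3}_2$ and $\vn{A}_2$. By H\"older interpolation, $\vn{A}_6\le\vn{A}_\infty^{1-n/6}\vn{A}_n^{n/6}$, hence $\vn{A^3}_2^n=\vn{A}_6^{3n}\le\vn{A}_\infty^{n(6-n)/2}\epsilon_0^{n/2}$; a similar interpolation rewrites $\vn{A}_2^{4-n}$ and $\vn{A}_2^n$ in terms of powers of $\vn{A}_\infty$ and $\vn{A}_n\le\epsilon_0^{1/n}$. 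Raising the resulting inequality to the power $2n-3$ produces $\vn{A}_\infty^{8n-12}$ on the left, with every $\vn{A}_\infty$-factor on the right accompanied by a small $\epsilon_0$-factor. Choosing $\epsilon_0$ small enough that the total absorption coefficient is strictly less than one yields the stated bound $\vn{A}_\infty^{8n-12}\le c\epsilon_0(\vn{\nabla_{(2)}A}_2^{2n^2-3n}+\epsilon_0)$.

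The principal obstacle is the three-dimensional case, where the Michael--Simon exponent $n/(n-1)=3/2$ is less favourable and the exponent bookkeeping is tighter. In particular, controlling $\vn{A}_2$ purely from $\vn{A}_3$ requires a supplementary local area bound, which I would obtain through a Simon-style monotonicity formula, available here because $\vn{H}_3\le\sqrt{3}\vn{A}_3$ is small under our hypothesis; alternatively one may reorder the H\"older steps so that every surplus $\vn{A}_2$ factor appears paired with an $\vn{A}_\infty$ factor that is absorbed alongside the others. The resulting calculation is lengthy but routine once the iteration exponents and absorption thresholds are fixed in terms of $c_{\gamma1}$, $c_{\gamma2}$, and $n$, which is why the detailed verification is referred in the paper to \cite{kuwert2002gfw,mythesis}.
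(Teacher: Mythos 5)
The paper itself never proves this proposition --- it is stated in the appendix with an explicit referral to the appendix of \cite{kuwert2002gfw} and to \cite{mythesis} --- so your sketch has to be judged against what such a proof must accomplish. Your plan for the first inequality (Michael--Simon applied to powers of $\vn{T}$ with the cutoff, Kato's inequality, an integration-by-parts interpolation for $\vn{\nabla T}_2$) is the standard Kuwert--Sch\"atzle route that the citation points to, and in outline it is fine for $n=2$ and, with heavier bookkeeping, for $n=3$.

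The genuine gap is in your derivation of the second inequality when $n=3$, and it occurs at two distinct steps. First, the factor $\vn{A}_{2,[\gamma>0]}^{4-n}=\vn{A}_{2,[\gamma>0]}$ coming from the first inequality cannot be ``rewritten in terms of powers of $\vn{A}_\infty$ and $\vn{A}_3$'': interpolation only raises integrability, and an $L^2$ norm is not controlled by $L^3$ and $L^\infty$ norms without a bound on $\mu([\gamma>0])$. Your proposed repair via a Simon-type monotonicity formula does not supply such a bound from the stated hypothesis: local smallness of $\int_{[\gamma>0]}\vn{A}^3d\mu$ is compatible with arbitrarily large area inside the unit ball (take many nearly flat sheets of enormous spheres passing through it), monotonicity only compares density ratios and needs information at large scales, and the proposition deliberately carries no area hypothesis --- removing the area bound of \cite{MWW11} is precisely the point of this paper --- so it cannot be smuggled back in; nor can ``reordering the H\"older steps'' lower integrability. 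Second, your absorption of the $\vn{TA^2}$ term fails for $n=3$: pointwise interpolation forces $\int\vn{A}^6d\mu\le\vn{A}_\infty^{3}\epsilon_0$, hence $\vn{A^3}_2^{3}\le\epsilon_0^{3/2}\vn{A}_\infty^{9/2}$ against $\vn{A}_\infty^{4}$ on the left (equivalently $\epsilon_0^{9/2}\vn{A}_\infty^{27/2}$ against $\vn{A}_\infty^{12}$ after raising to the power $2n-3$); the exponent is strictly supercritical, so no smallness of $\epsilon_0$ lets you absorb it without an a priori bound on $\vn{A}_\infty$, which would be circular. (For $n=2$ the exponents coincide, which is the only reason your absorption closes there.) The correct treatment, as in the cited sources and as the main text itself does when it invokes the first inequality of Proposition \ref{MS2prop} ``taking into account the $n=3$ statement of Lemma \ref{MS1lem}'', is to dispose of the sextic term at the level of the integral $\int_M\vn{A}^6\gamma^sd\mu$ via the multiplicative Sobolev inequality, trading it for $\vn{\nabla_{(2)}A}_2^2$ after absorption there, and more generally to run the Michael--Simon iteration keeping the scale-invariant $L^n$ norm of $A$ throughout, so that the prefactor comes out as $\epsilon_0\ge\vn{A}_{n,[\gamma>0]}^n$ rather than as an uncontrolled $\vn{A}_{2,[\gamma>0]}$ factor; the second statement for $n=3$ is not a corollary of the first statement plus the hypothesis, but a separate inequality proved by rerunning that machinery.
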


\bibliographystyle{plain}
\bibliography{mbib}
\end{document}